\newtheorem{theorem}{Theorem}[section]
\newtheorem{lemma}[theorem]{Lemma}
\newtheorem{Remark}[theorem]{Remark}
\newtheorem{Conj}[theorem]{Conjecture}
\newtheorem{proposition}[theorem]{Proposition}
\newtheorem{Example}[theorem]{Example}
\newtheorem{Number}[theorem]{\!\!}
\newtheorem{thmintro}{Theorem}
\newenvironment{remark}{\begin{Remark}\rm}{\end{Remark}}
\newenvironment{numba}{\begin{Number}\rm}{\end{Number}}
\newenvironment{proof}{{\noindent\bf Proof.}}%
                  {\nopagebreak\hspace*{\fill}$\Box$\medskip\par}
\newcommand{\at}{\symbol{'100}}
\newcommand{\mto}{\mapsto}
\newcommand{\N}{{\mathbb N}}
\newcommand{\bF}{{\mathbb F}}
\newcommand{\Z}{{\mathbb Z}}
\newcommand{\R}{{\mathbb R}}
\newcommand{\cA}{{\mathcal A}}
\newcommand{\cE}{{\mathcal E}}
\newcommand{\cF}{{\mathcal F}}
\DeclareMathOperator{\Aut}{Aut}
\DeclareMathOperator{\End}{End}
\newcommand{\sub}{\subseteq}
\DeclareMathOperator{\id}{id}
\DeclareMathOperator{\ad}{ad}
\DeclareMathOperator{\Hom}{Hom}
\def\Laurp{\mathbb{F}_p(\!(t)\!)}
\def\Powerp{\mathbb{F}_p[\![t]\!]}
\def\spacen{\mathbb{V}_n}
\def\spacej{\mathbb{V}_j}
\def\spacek{\mathbb{V}_k}
\def\spacejm{\mathbb{V}_{j-1}}
\def\IdA{I} 
\def\timest{[{\it t}\cdot]}
\def\timestd{[t^{\oplus d}\cdot]}
\def\timestc{[t^{\oplus d'}\cdot]}
\def\zz{\xi}
\def\yy{\eta}
\definecolor{satinsheengold}{RGB}{203, 161, 53}
\date{}
\title{ Locally pro-$p$ contraction groups are nilpotent}
\author{Helge Gl\"{o}ckner and George A. Willis\thanks{This research was supported by the Australian Research Council grant FL170100032
and the Deutsche Forschungsgemeinschaft, grant GL 357/10-1.}}
\begin{document}
\maketitle
\begin{abstract}
The authors have shown previously that every locally pro-$p$
contraction group decomposes into the direct product of a $p$-adic
analytic factor and a torsion factor. It has long been known that
$p$-adic analytic contraction groups are nilpotent. We show here
that the torsion factor is nilpotent too, and hence that every
locally pro-$p$ contraction group is nilpotent.  
\end{abstract}
{\bf Classification:} Primary 22D05;
secondary
20E22, 
20E36, 
20F18, 
20J06\\[3mm] 
{\bf Key words:} contraction group; torsion group; extension; 
abelian group; nilpotent group; isomorphism types\\[11mm]
\section{Introduction}
A \emph{contraction group} is a pair $(G,\tau)$, where~$G$ is a
locally compact group and~$\tau$ is an automorphism
of~$G$ such that $\tau^n(x)\to e$ as $n\to\infty$ for all~$x\in G$
(where~$e$ is the neutral element of $G$).
In~\cite{GW} the authors describe the structure of general totally
disconnected, locally compact contraction groups. In the present paper,
we answer a question which remained open in that description, namely,
whether locally pro-$p$ contraction groups must be nilpotent;
{\it cf.\/}~Problem~20.4.1 in \cite{CM}
(a topological group is \emph{locally pro-$p$} if it has an open subgroup
which is a pro-$p$ group.) The positive answer has implications
in the structure theory of totally disconnected, locally compact
groups ({\it cf.\/}~\cite{GWil}),
where contraction groups appear naturally in the study
of general automorphisms~\cite{BW}. It also has implications for Moufang twin trees, see~\cite{Zsystem} and Remark~\ref{rem:Zsystem}. 

Theorem~B in~\cite{GW} asserts that every totally disconnected
locally compact contraction~group is the direct product of a
torsion subgroup of finite exponent and a subgroup in which every
element is divisible. Furthermore, Theorem~3.3 in \cite{GW}
asserts that every totally disconnected locally compact
contraction group has a composition series in which the
composition factors are contraction groups having no closed normal
subgroups which are contraction-invariant.
In the case of locally pro-$p$ groups, these composition factors
are isomorphic to either a finite-dimensional vector space
over~$\mathbb{Q}_p$ or to~$\Laurp$ (see \cite[Theorem~A]{GW}).
Moreover, any locally pro-$p$ contraction group
is the direct product of a torsion subgroup and a $p$-adic Lie group
({\it cf.\/}~Theorem~B in \cite{GW}).
Locally pro-$p$ groups are therefore solvable and, moreover,
their $p$-adic factor is nilpotent (by \cite[Theorem 3.5 (ii)]{Wang}).
It is hence natural to ask whether the torsion factor is nilpotent as well. 

A torsion locally pro-$p$ contraction group $(G,\tau)$ has
a composition series in which each factor is isomorphic to~$\Laurp$,
and thus is a torsion group of exponent~$p$.
Hence there is a positive integer~$b$, at most equal to the
length of the composition series for~$(G,\tau)$, such that
$g^{p^b}=e$ for all~$g\in G$.
We will say that~$(G,\tau)$ is a \emph{$p$-power contraction group}.  
\begin{thmintro}
\label{thm:main}
Every $p$-power contraction group,~$(G,\tau)$, is nilpotent. 
\end{thmintro}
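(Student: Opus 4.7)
The plan is to induct on the composition length $\ell$ of $(G,\tau)$ in the lattice of closed, $\tau$-invariant normal subgroups. By Theorem~3.3 of~\cite{GW} such a composition series exists; the torsion hypothesis combined with Theorem~A of~\cite{GW} forces each factor to be isomorphic to $\Laurp$, and the bound $g^{p^b}=e$ for all $g\in G$ makes $\ell$ finite. The base case $\ell=1$ is immediate since $\Laurp$ is abelian.

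For the inductive step, fix $\ell\ge 2$ and pick a minimal closed, normal, $\tau$-invariant subgroup $N\subseteq G$, so that $N\cong\Laurp$ and $Q:=G/N$ has composition length $\ell-1$ and is nilpotent of some class $c$ by induction. Since $N$ is abelian, $G$ will be nilpotent (of class at most $c+k$) as soon as the conjugation action of $G$ on $N$ is \emph{$k$-step unipotent}, i.e., $[G,[G,\ldots,[G,N]\ldots]]=\{e\}$ for some $k$-fold iterated bracket. Each element already acts unipotently: viewing $N$ as an $\bF_p$-vector space and $\phi_g(n):=gng^{-1}$ as an $\bF_p$-linear automorphism, $g^{p^b}=e$ forces $\phi_g^{p^b}=\id$, so $(\phi_g-\id)^{p^b}=0$ in characteristic~$p$. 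Fix a $\tau$-stable compact open pro-$p$ subgroup $U\subseteq G$ with $\tau(U)\subseteq U$, $\bigcap_n\tau^n(U)=\{e\}$, $\bigcup_n\tau^{-n}(U)=G$; it then suffices to establish the bound for the $U$-action on $N$, because the $\tau$-equivariance $\phi_{\tau(g)}=\tau\phi_g\tau^{-1}$ propagates it to all of $G=\bigcup_n\tau^{-n}(U)$: for $g_1,\ldots,g_k\in G$ and $n\in N$, choosing $m$ with all $\tau^m(g_i)\in U$ yields $[g_1,\ldots,[g_k,n]]=\tau^{-m}([\tau^m(g_1),\ldots,[\tau^m(g_k),\tau^m(n)]])=\tau^{-m}(e)=e$.

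The principal obstacle is thus a uniform commutator bound for the $U$-action on $N$. The natural strategy is a topological Engel/Kolchin argument using a $U$-invariant filtration of $N$ by compact open subgroups with finite $p$-group quotients, together with local finiteness of $U$ (Zelmanov, since $U$ is compact of bounded exponent); the finite-dimensional Engel/Kolchin theorem then provides nonzero fixed vectors at each level, which a compactness/diagonal argument should assemble into a uniform statement. The subtle point, and where I expect the real work to lie, is constructing such a filtration: naive candidates like $\tau^n(U)\cap N$ are not $U$-invariant in general (since $\tau^n(U)$ need not be normal in $U$), so one must exploit the interplay between the conjugation action, the $\tau$-action, and characteristic pro-$p$ subgroups of $U$ to produce a $U$-invariant filtration of $N$ with the required properties.
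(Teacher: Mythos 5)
Your overall frame (induction on composition length, reduction to the conjugation action on an abelian piece, choice of a $\tau$-invariant compact open subgroup $U$ with $\tau(U)\subseteq U$, and propagation of commutator identities from $U$ to $G=\bigcup_n\tau^{-n}(U)$) is reasonable, and the propagation step itself is correct. But the proposal stops exactly where the theorem's difficulty begins, and the strategy you sketch for that step would not deliver what your reduction needs. Your reduction requires a \emph{uniform} $k$ such that the $k$-fold commutator $[U,[U,\ldots,[U,N]\ldots]]$ is trivial. Levelwise arguments cannot produce such a $k$: on each finite $p$-quotient of $N\cap U$ the $U$-action is automatically unipotent (no Kolchin or Zelmanov is needed --- a pro-$p$ group acting on a finite $p$-group has nontrivial fixed points), but the unipotence degree may grow with the level, and a compactness/diagonal argument over the levels assembles the levelwise fixed vectors into a single nonzero $U$-fixed vector in $N\cap U$, not into a uniform commutator length. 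Bounded exponent does not rescue this: the restricted Burnside theorem bounds the class of a finite group of exponent $p^b$ only in terms of its number of generators, and the $k$-fold commutator involves $k$ group elements, so no bound independent of $k$ follows. In effect, the uniform bound you defer to ``a topological Engel/Kolchin argument'' is essentially equivalent to the nilpotency of the extension you are trying to prove, so this is the gap, not a technical loose end about constructing a filtration.

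The paper's induction is arranged precisely so that only a single common fixed vector is needed, which is a strictly weaker demand than your uniform bound: it takes $N$ to be the \emph{second-largest} term of a composition series (so $G/N\cong\Laurp$), locates inside $Z(N)$ an exponent-$p$ subgroup $P\cong\Laurp^d$, and proves (Theorem~\ref{thm:second_main}) that the induced continuous homomorphism $\phi\colon\Laurp\to\Aut(\Laurp^d)$ satisfying $\phi\circ\timest=\ad(\timestd)\circ\phi$ has a nonzero vector fixed by all $\phi(f)$; this yields $Z(G)\neq\{e\}$ and the induction proceeds via $G/Z(G)$. Even in that fixed-vector form, the hard part is not the compact direction ($\phi(\Powerp)$, where pro-$p$ fixed-point and compactness arguments of the kind you propose do suffice), but invariance under $\phi(t^{-n})$ for all $n$; note that your $\tau$-propagation trick transports identities valid for all arguments, not the property of one particular vector, so it gives no access to these non-compact directions. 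Handling them is where the paper's work lies: the infinite block-matrix representation of $\End(\Laurp^d)$, the vanishing patterns forced by continuity and the intertwining relation, the three-step ``above/on/below the diagonal'' analysis exploiting the commutation of $A(f)$ with its $\ad(\timestd^{\,n})$-shifts, the characteristic-$p$ unipotence argument on the diagonal, and the approximation lemma. None of this machinery, nor any substitute for it, appears in your sketch.
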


Throughout, $(\Laurp,+)$ will be the additive group of the field
of formal Laurent series over the finite field $\mathbb{F}_p$
(see, e.g., \cite{Wei}).
This will usually be abbreviated as $\Laurp$ for convenience.
Then $\Laurp$ is a totally disconnected, locally compact abelian
group and~$\Powerp$, the additive group of the ring of formal
power series, is a compact open subgroup. Elements of $\Laurp$
have the form $f =  \sum_{i\in\mathbb{Z}} f_it^i$, where
$f_i\in \mathbb{F}_p$ and
there is $I$ such that $f_i = 0$ for all~$i<I$. 

The map $ \timest : f\mapsto tf$ is an automorphism of
$\Laurp$ and is a contraction because  $\timest^kf\to 0$
as $k\to\infty$ for every $f\in\Laurp$. The automorphism $\timestd$ of $\Laurp^d$ is given by
$$
\timestd : (f_1,\dots, f_d)\mapsto (\timest(f_1), \dots, \timest(f_d))
= (tf_1,\dots, tf_d).
$$

We give
$\Aut(\Laurp^d)$ the Braconnier topology
({\it cf.\/}~\cite[Definition~9.14]{Str}).
Theorem~\ref{thm:main} follows from the next theorem,
which uses notation as in~(\ref{eq:adt}).
\begin{thmintro}
\label{thm:second_main}
Suppose that $\phi : \Laurp \to \Aut(\Laurp^d)$ is a continuous
homomorphism of groups and satisfies
\begin{equation}
\label{eq:phi_identity}
\phi\circ\timest  = \ad(\timestd)\circ \phi.
\end{equation}
Then there is $\zz\in \Laurp^d\setminus\{0\}$
such that $\phi(f)(\zz) = \zz$ for every $f\in \Laurp$.
\end{thmintro}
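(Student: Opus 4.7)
Let $V := \Laurp^d$ and $T := \timestd$; write $V^{\phi(\Laurp)} := \bigcap_{f \in \Laurp} \ker(\phi(f) - \id)$. My plan is to prove $V^{\phi(\Laurp)} \ne \{0\}$ in three steps.

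\emph{First, a structural observation.} From the intertwining $\phi(tf) = T\phi(f)T^{-1}$ one derives the identity $\phi(f)(T\xi) = T\phi(t^{-1}f)(\xi)$, which shows that if $\xi \in V^{\phi(\Laurp)}$ then $T\xi \in V^{\phi(\Laurp)}$; by symmetry also $T^{-1}\xi \in V^{\phi(\Laurp)}$. Hence $V^{\phi(\Laurp)}$ is a closed subgroup of $V$ invariant under multiplication by $t^{\pm 1}$, and by continuity of scalar multiplication together with density of $\bF_p[t,t^{-1}]$ in $\Laurp$, it is in fact a closed $\Laurp$-subspace of $V$. Consequently it is either $\{0\}$ or $\Laurp$-linearly isomorphic to $\Laurp^k$ for some $1 \le k \le d$, so it suffices to rule out $\{0\}$.

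\emph{Second, pro-$p$ fixed points.} The image $H := \phi(\Powerp) \sub \Aut(V)$ is compact, so it stabilizes some compact open subgroup $U \sub V$. As an abelian exponent-$p$ pro-$p$ group acting continuously on the nontrivial pro-$p$ group $U$, $H$ has a nontrivial fixed subgroup, so $V^H \supset U^H \ne \{0\}$. Since $t\Powerp \sub \Powerp$, the intertwining gives $\ad(T)(H) = \phi(t\Powerp) \sub H$, from which one checks directly that $T^{-1} V^H \sub V^H$ and, using $\Laurp = \bigcup_{n \ge 0} t^{-n}\Powerp$ together with $\phi(t^{-n}\Powerp) = T^{-n} H T^n$, that $V^{\phi(\Laurp)} = \bigcap_{n \ge 0} T^{-n} V^H$. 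The theorem is thus reduced to showing that this decreasing intersection of nontrivial closed subgroups is itself nontrivial.

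\emph{Third, induction on $d$ and the main obstacle.} If $V$ admits a proper nontrivial closed $\phi(\Laurp)$-invariant $\Laurp$-subspace $W$, then $\phi|_W : \Laurp \to \Aut(W)$ satisfies the hypotheses of the theorem with $\dim_\Laurp W < d$, and the inductive hypothesis delivers $\xi \in W \setminus \{0\} \sub V^{\phi(\Laurp)}$. The main obstacle is the remaining case---which includes the base case $d = 1$, where by Step~1 $V^{\phi(\Laurp)}$ is forced to be either $\{0\}$ or all of $V$---in which no such $W$ exists. For this I would pass via the truncated exponential/logarithm (a bijection between unipotent $\alpha$ with $\alpha^p = \id$ and nilpotent $N$ with $N^p = 0$) to the abelian Lie algebra $\mathfrak{u} := \log\phi(\Laurp) \sub \End(V)$, whose elements $M_f := \log\phi(f)$ are commuting nilpotent operators satisfying $M_{tf} = T M_f T^{-1}$; the plan is then to exploit the interplay between the $T$-contraction on $V$ and the $\Laurp$-module structure $f \mapsto M_f$ on $\mathfrak{u}$ to produce a common kernel vector, most plausibly by arguing that the irreducibility hypothesis together with continuity and the contractive dynamics of $T$ forces $\phi$ to be trivial.
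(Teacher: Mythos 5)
Your Steps 1 and the formal intertwining computations are fine, but the proposal has a genuine gap at its core, located in Step 2. The assertion that ``an abelian exponent-$p$ pro-$p$ group acting continuously on a nontrivial pro-$p$ group has a nontrivial fixed subgroup'' is not a valid general principle. The orbit-counting argument works when a finite $p$-group acts on a nontrivial \emph{finite} $p$-group, but it does not pass to the profinite limit: writing $U=\varprojlim U/U_n$ with $H$-invariant open subgroups, the transition maps between the fixed-point groups $(U/U_{n+1})^H\to (U/U_n)^H$ need not be surjective, and an inverse limit of nontrivial finite groups along non-surjective maps can be trivial. Indeed there are counterexamples even under your exact hypotheses: take a smooth module $M=\bigcup_n M_n$ over $\bigoplus_{j}\Z/p\Z$ built so that each $m\in M_n$ equals $(\sigma_{n+1}-1)m'$ for some $m'\in M_{n+1}$ (with $\sigma_{n+1}$ acting trivially on $M_n$, so all $\sigma_j$ commute and have order $p$); then the coinvariants of $M$ vanish, and the Pontryagin dual $U=M^\ast$ is a nontrivial compact $\bF_p$-vector group with a continuous action of the exponent-$p$ pro-$p$ group $\prod_j\Z/p\Z\cong(\Powerp,+)$ having $U^H=\{0\}$. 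So the nontriviality of $V^H$ cannot be bought this cheaply; producing fixed vectors is exactly the hard content of the theorem, and any proof must use the specific interplay with the contraction $\timestd$, not just that $\phi(\Powerp)$ is pro-$p$ of exponent $p$. Moreover, even granting $V^H\neq\{0\}$, your remaining reduction --- showing $\bigcap_{n\geq 0}\timestd^{-n}V^H\neq\{0\}$ --- is untouched: these are closed but not compact subgroups, so no compactness argument applies, and a decreasing intersection of nontrivial closed subgroups of $\Laurp^d$ can perfectly well be trivial.

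Step 3 is explicitly a plan rather than a proof, and the hoped-for conclusion in the ``irreducible'' case (that $\phi$ is then trivial) is precisely where all the difficulty lives; no argument is offered for it. Note also a structural subtlety: elements of $\Aut(\Laurp^d)$ are only continuous group automorphisms, not $\Laurp$-linear, so invariant closed $\Laurp$-subspaces need not be plentiful, and the paper instead works with invariant closed \emph{subgroups} (the subgroup $\mathcal{U}_0$ of vectors with bounded $\phi(\Laurp)$-orbit). For comparison, the paper's proof represents endomorphisms by infinite $\bF_p$-block matrices, normalizes so that $\Powerp^d$ is $\phi(\Powerp)$-invariant, passes to $\mathcal{U}_0$ to reach the lower-triangular case (nontriviality of $\mathcal{U}_0$ already requires a commutation argument plus a compactness/approximation lemma), uses Engel's theorem to kill the diagonal, and then splits on whether the algebra generated by $\{\phi(t^r)-\id\}$ is nilpotent, handling the non-nilpotent case by another delicate matrix commutation argument together with the same approximation lemma. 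Your outline would need substitutes for all of these steps; as written, the key existence claims are either unproved or rest on a false lemma.
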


Theorem~\ref{thm:main} may be applied to give a new proof of a theorem about buildings shown in~\cite{Zsystem} and to answer a question about endomorphisms of profinite groups posed in~\cite{Reid}. Details are given below in Remarks~\ref{rem:Zsystem} and~\ref{rem:endo_profinite}.

The set of natural numbers, $\N$, is assumed throughout to include $0$.

\paragraph{Acknowledgements:}  This work was stimulated by and has benefitted from discussions with Pierre-Emmanuel Caprace, Bernhard M\"uhlherr and Colin Reid and it is a pleasure thank them for their contributions.

\section{Reduction of Theorem~\ref{thm:main} to Theorem~\ref{thm:second_main}}
\label{sec:reduction_to_2step}

Every totally disconnected locally compact contraction group $(G,\tau)$
has a composition series and, if the group is a $p$-power contraction group,
then all composition factors are isomorphic to $(\Laurp,\timest)$
({\it cf.\/}~Theorem 3.3 and Theorem~A in \cite{GW}).
The proof of Theorem~\ref{thm:main}
is by induction on the number of composition factors.
If there is one factor, then $G\cong \Laurp$ and $G$ is abelian;
likewise if $G=\{e\}$.

Assume that every $p$-power contraction group having at most~$k$
composition factors is nilpotent and consider a $p$-power
contraction group $(G,\tau)$ having~$k+1$ composition factors.
To show that~$G$ is nilpotent, it suffices to show that its centre
is non-trivial, for $Z(G)$ is a characteristic, and hence $\tau$-invariant,
subgroup of~$G$ and the quotient $(G/Z(G), \tau|^{G/Z(G)})$ is then a
contraction group having at most~$k$ composition factors. Hence $G/Z(G)$
is nilpotent, by the induction hypothesis, and~$G$ is nilpotent.

The second largest group in a composition
series for $G$ is a $\tau$-invariant normal subgroup,~$N$,
such that $(G/N,\tau|^{G/N})\cong (\Laurp,\timest)$ and~$(N,\tau|_N)$
is a contraction group having~$k$ composition factors.
The induction hypothesis implies that~$N$ is nilpotent and hence
that~$Z(N)$ is non-trivial. Since~$Z(N)$ is a characteristic subgroup
of~$N$, it is $\tau$-invariant. Moreover, the endomorphism
$z\mapsto pz : Z(N)\to Z(N)$ has non-trivial kernel,~$P$ say,
which is a characteristic subgroup of~$Z(G)$. Hence~$P$ is $\tau$-invariant
and the contraction group $(P,\tau|_P)$ is a torsion group of exponent~$p$.
Therefore $(P,\tau|_P) \cong (\Laurp^d, \timestd)$ for some $d>0$,
by \cite[Theorem~E]{GW2}.

Since~$P$ is a characteristic subgroup of~$N$ and~$N$ is normal in~$G$,
the assignment $\phi(g)(z):=gzg^{-1}$ defines a homomorphism
$\phi : G \to \Aut(P)$,
which is continuous as the map $G\times P\to P$, $(g,z)\mapsto
\phi(g)(z)$ is so (see \cite[Lemma~10.4 (c)]{Str}).
The proof of Theorem~\ref{thm:main} may be
completed by showing that there is a non-identity element $z\in P$
such that $\phi(g)(z)=z$ for all $g\in G$.

For $g\in G$ and $h\in P$ we have, by definition of~$\phi$, 
$$
\phi(\tau(g))(\tau(h)) = \tau(g)\tau(h)\tau(g)^{-1} = \tau(\phi(g)(h)).
$$
Hence $\phi(\tau(g)) = \tau|_P\circ \phi(g)\circ \tau^{-1}|_P$
for all~$g\in G$. 
Since~$P$ is contained in~$Z(N)$, $\phi$ induces a continuous homomorphism
$\phi|^{G/N} : G/N \to \Aut(P)$. Then we have
\begin{equation}
\label{eq:first_identity}
(\phi|^{G/N}\circ \tau|^{G/N})(g)
= \tau|_P\circ \phi|^{G/N}(g)\circ \tau|_P^{-1}\quad (g\in G/N).
\end{equation}

As previously remarked, $(G/N,\tau|^{G/N})\cong (\Laurp,\timest)$
and there is~ $d>0$ such that~$(P,\tau|_P) \cong (\Laurp^d, \timestd)$.
Under these isomorphisms, $\phi|^{G/N}$ corresponds to a continuous
homomorphism
$\Laurp \to \Aut(\Laurp^d)$ and it will be convenient to denote this
homomorphism too by $\phi$. It is also convenient to introduce the
notation $\ad(\timest)$ for the map 
\begin{equation}
\label{eq:adt}
\ad(\timest)(\psi)
= \timestd\circ \psi \circ \timestd^{-1}, \quad (\psi\in \Aut(\Laurp^d)).
\end{equation}
With these isomorphisms and notation, the identity~\eqref{eq:first_identity}
implies~\eqref{eq:phi_identity},\footnote{If $\phi\colon G/N\to\Aut(P)$
is \emph{any} continuous homomorphism and we write $\phi$ also for the corresponding
continuous homomorphism $\Laurp \to \Aut(\Laurp^d)$,
then the latter satisfies~\eqref{eq:phi_identity} if and only if the former
satisfies~\eqref{eq:first_identity}.}
and the proof of
Theorem~\ref{thm:main} may be completed by showing that
Theorem~\ref{thm:second_main} holds. 

\begin{remark}
Given a continuous homomorphism $\phi : \Laurp \to \Aut(\Laurp^d)$
satisfying~\eqref{eq:phi_identity}, form the semidirect product
$\Laurp^d\rtimes \Laurp$. Then the map 
$$
\timestd \times \timest : \Laurp^d\rtimes \Laurp\to \Laurp^d\rtimes \Laurp
$$ 
is a contractive automorphism. Since $\Laurp^d\rtimes \Laurp$ is nilpotent
only if there is $\zz\in \Laurp^d\setminus\{0\}$ such that
$\phi(g)(\zz) = \zz$ for every $g\in \Laurp$, Theorem~\ref{thm:second_main}
is necessary as well as sufficient for Theorem~\ref{thm:main}. 
\end{remark}

\section{The automorphism group of $\Laurp^d$}
\label{sec:Automorphism_group}

If $G$ is a topological abelian group, we let $\End(G)$ be the ring of all continuous
endomorphisms of~$G$.
The proof of Theorem~\ref{thm:second_main} goes by representing endomorphisms
in~$\End(\Laurp^d)$ as infinite block matrices with the blocks
being~$d\times d$ matrices over $\mathbb{F}_p$. In this section, conditions
on block matrices for them to correspond to elements of $\End(\Laurp^d)$
are determined, and then additional conditions for them to belong to the
image of $\phi : \Laurp \to \Aut(\Laurp^d)$ and
satisfy~\eqref{eq:phi_identity} are derived.

\begin{remark}
\label{examp:linear}
The group $\Laurp^d$ is a $d$-dimensional vector space over $\Laurp$
and every $\Laurp$-linear transformation belongs to~$\End(\Laurp^d)$.
However, unlike the characteristic~$0$ cases of~$(\mathbb{R}^d,+)$
and~$(\mathbb{Q}_p^d,+)$, in which every continuous group endomorphism is
linear over $\mathbb{R}$ and $\mathbb{Q}_p$ respectively, many continuous
endomorphisms of~$(\Laurp^d,+)$ are not linear over~$\Laurp$.  
\end{remark}

\subsubsection*{Representing endomorphisms by infinite matrices}

For each $n\in\mathbb{Z}$, let $\spacen =
\left\{ (a_1t^n,\dots,a_dt^n) \mid
a_r\in \mathbb{F}_p,\ r\in\{1,\dots,d\}\right\}$
be the subspace of $\Laurp^d$ consisting of $d$-tuples in which each
coordinate is a degree~$n$ monomial and let $\pi_n$ be the obvious
projection of $\Laurp^d$ onto~$\spacen$. Then $\spacen\cong \mathbb{F}_p^d$
and, given $i,j\in\mathbb{Z}$, each endomorphism
$A : \Laurp^d \to \Laurp^d$ determines an endomorphism
$\pi_i\circ A|_{\spacej} : \mathbb{F}_p^d \to \mathbb{F}_p^d$
via this isomorphism. Denote $\pi_i\circ A|_{\spacej}$ by $A_{i,j}$.
It will be seen that this representation of endomorphisms,~$A$, by
infinite block matrices, $(A_{i,j})_{i,j\in\mathbb{Z}}$, is faithful and
preserves the ring structure of~$\End(\Laurp^d)$.
We identify $A_{i,j}\in\Hom(\mathbb{V}_j,\mathbb{V}_i)$
and the correponding $(d\times d)$-matrix
with entries in~$\bF_p$; conversely,
any such matrix $A_{i,j}$ may be identified with
a homomorphism $\mathbb{V}_j\to\mathbb{V}_i$;
the meaning will be clear from the context.

That the matrix representation is faithful may be immediately verified.
\begin{proposition}
\label{prop:block_matrix}
For each endomorphism $A : \Laurp^d \to \Laurp^d$, the infinite block matrix $(A_{i,j})_{i,j\in\mathbb{Z}}$ satisfies: 
\begin{description}
\item[(M1)] for each $i\in\mathbb{Z}$ there is $J_i\in \mathbb{Z}$
such that $A_{i,j}=0$ for all $j>J_i$;
\label{prop:block_matrix1}
\item[(M2)] for each $j\in\mathbb{Z}$ there is $I_j\in\mathbb{Z}$
such that $A_{i,j}=0$ for all $i<I_j$; and
\label{prop:block_matrix2}
\item[(M3)] there are $I,J\in\mathbb{Z}$ such that $A_{i,j}=0$
whenever $i<I$ and $j>J$. 
\label{prop:block_matrix3}
\end{description}
\end{proposition}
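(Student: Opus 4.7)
The plan is to derive M1--M3 directly from continuity of $A$, together with properties of the standard neighbourhood basis $U_n := (t^n\Powerp)^d$ of $0$ in $\Laurp^d$. I would begin by recording two elementary facts: (a) $\spacej\subseteq U_n$ whenever $j\geq n$, since a $d$-tuple of monomials of degree $j$ lies in $(t^n\Powerp)^d$ as soon as $j\geq n$; and (b) $\pi_i$ vanishes identically on $U_n$ whenever $n>i$, because each coordinate of an element of $U_n$ has only terms of degrees $\geq n$, in particular no term of degree $i<n$.

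Properties M1 and M3 then follow from continuity of $A$ at $0$ by essentially the same argument. For M1, fix $i$ and use continuity to choose $N\in\Z$ with $A(U_N)\subseteq U_{i+1}$; then for $j\geq N$, fact~(a) gives $\spacej\subseteq U_N$, hence $A(\spacej)\subseteq U_{i+1}$, and fact~(b) gives $A_{i,j}=\pi_i\circ A|_{\spacej}=0$. One may take $J_i:=N-1$. For M3, pick any $I\in\Z$ and use continuity to choose $J$ with $A(U_{J+1})\subseteq U_I$; then for every $i<I$ and every $j>J$ the same two facts together yield $A_{i,j}=0$.

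Property M2 is of a different flavour: here the key is that $\spacej$ is finite (of cardinality $p^d$), so that $A(\spacej)$ is a finite subset of $\Laurp^d$. Every element of $\Laurp^d$ lies in $U_n$ for $n$ sufficiently negative; taking a common such $n$ for the finitely many elements of $A(\spacej)$ produces $I_j\in\Z$ with $A(\spacej)\subseteq U_{I_j}$, and then fact~(b) gives $A_{i,j}=0$ whenever $i<I_j$. I do not foresee any real obstacle: each clause is essentially a translation of continuity of $A$, or of the Laurent series property that supports are bounded below, into the block-matrix language, once the compatibility between the filtration $(U_n)$ and the decomposition via the $\pi_n$ has been set up.
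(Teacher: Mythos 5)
Your proposal is correct and follows essentially the same route as the paper: (M1) and (M3) come from continuity of $A$ at $0$ (the paper phrases M1 via openness of the kernel of $\pi_i\circ A$ into the discrete group $\mathbb{F}_p^d$, which is the same mechanism as your choice of $U_N$ with $A(U_N)\subseteq U_{i+1}$), and (M2) comes from finiteness of $\spacej$ together with the fact that each Laurent series has support bounded below. No gaps.
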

\begin{proof}
{\bf(M1)} For each~$i\in\mathbb{Z}$, the map $\pi_i\circ A$ is a continuous
group homomorphism $(\Laurp^d,+) \to (\mathbb{F}_p^d,+)$.
Since $\mathbb{F}_p^d$ is discrete,
the kernel of $\pi_i\circ A$ is open and hence contains
$t^{J_i+1}\Powerp^d$ for some $J_i\in\mathbb{Z}$.
Thus $A_{i,j}=0$ for all $j>J_i$.

{\bf(M2)} For each $j\in\mathbb{Z}$, the image of the map
$A|_{\spacej}$ is a finite subset of
$\Laurp^d$ and every~$\zz$ in the image has $\zz_n=0$ eventually
as~$n\to-\infty$. 

{\bf(M3)} Continuity of~$A$ implies that there is $J\in\mathbb{Z}$
such that, if $\zz$ belongs to $t^J\Powerp^d$, then $A\zz$ is
in~$\Powerp^d$. Then $A_{i,j}=0$ for all $i<0$ and $j>J$.
\end{proof} 

If~$(A_{i,j})$ satisfies condition~{\bf(M1)} and~$(B_{i,j})$ satisfies
{\bf(M2)}, then the sum $\sum_{j\in\mathbb{Z}} A_{i,j}B_{j,k}$ is finite
for all $i,k$. While the product matrix~$(A_{i,j})(B_{i,j})$ need not
satisfy the same conditions, in the presence of~{\bf(M3)} these
conditions are preserved under matrix multiplication.
\begin{proposition}
\label{prop:MatricesM}
Denote the set of infinite ($d\times d$)-block matrices over $\mathbb{F}_p$
satisfying~{\bf(M1)}--{\bf(M3)} by $M_{d,\mathbb{Z}}(\mathbb{F}_p)$.
Then $M_{d,\mathbb{Z}}(\mathbb{F}_p)$ is a ring under the usual matrix addition
and multiplication and the map 
$$
A\mapsto (A_{i,j})_{i,j\in\mathbb{Z}} : \End(\Laurp^d)\to M_{d,\mathbb{Z}}(\mathbb{F}_p)
$$ 
is a ring homomorphism.
\end{proposition}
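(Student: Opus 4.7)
The plan is to establish the proposition in two halves: first that (M1)--(M3) are preserved under matrix addition and multiplication and hence define a ring $M_{d,\mathbb{Z}}(\bF_p)$ of formal block matrices, and then that the representation $A\mapsto(A_{i,j})$ intertwines composition in $\End(\Laurp^d)$ with this matrix multiplication.

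Closure under addition is obvious because (M1)--(M3) merely bound where entries vanish. For multiplication, the first step is to verify that
\[
(AB)_{i,k}\;:=\;\sum_{j\in\mathbb{Z}}A_{i,j}B_{j,k}
\]
is a finite sum: by (M1) for $A$ only $j\le J_i^A$ contribute, and by (M2) for $B$ only $j\ge I_k^B$ contribute. I would then check (M1) for $AB$ by fixing $i$, using (M3) for $B$ to restrict to $j\ge I^B$ once $k>J^B$, combining with (M1) for $A$ so that $j\in [I^B,J_i^A]$, and applying (M1) for $B$ to each of these finitely many $j$ to push $k$ past a common threshold; (M2) for $AB$ is symmetric. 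For (M3), when $i<I^A$ and $k>J^B$ the sum already collapses to $j\in[I^B,J^A]$ by (M3) on both factors, and choosing
\[
I\;:=\;\min\bigl(I^A,\;I_{I^B}^A,\;I_{I^B+1}^A,\;\ldots,\;I_{J^A}^A\bigr),
\]
a minimum over finitely many integers supplied by (M2) for $A$, ensures that every term in the collapsed sum vanishes as soon as $i<I$. The ring axioms for $M_{d,\mathbb{Z}}(\bF_p)$---associativity, distributivity, and the identity $\delta_{i,j}\cdot\id$---then reduce to formal manipulations of finite sums.

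For the homomorphism property, $(A+B)_{i,j}=A_{i,j}+B_{i,j}$ is immediate from the definitions. To prove $(AB)_{i,k}=\sum_j A_{i,j}B_{j,k}$, I would fix $v\in\spacek$ and write
\[
B(v)\;=\;\sum_{j\in\mathbb{Z}}B_{j,k}(v),
\]
a sum that converges in $\Laurp^d$ because $B(v)$ has only finitely many non-zero components in negative degrees and the partial sums $\sum_{j\le N}B_{j,k}(v)$ agree with $B(v)$ up to degree $N$. Continuity and additivity of $A$ then give $A(B(v))=\sum_{j}A(B_{j,k}(v))$, and applying $\pi_i$ produces $\sum_j A_{i,j}B_{j,k}(v)$ as required, with all but finitely many summands vanishing on $v$ by the $j$-range argument above. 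Sending $\id_{\Laurp^d}$ to the identity block matrix is immediate, so the map is a unital ring homomorphism.

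The main obstacle is the index bookkeeping in verifying (M3) for the product: (M1) and (M2) alone give bounds that depend on the particular row or column one starts with, and it is precisely the uniform condition (M3) on both factors---combined with (M2) for $A$ over the finite interval $[I^B,J^A]$ of columns---that delivers uniform constants $(I,J)$ for $AB$. Everything else is routine unwinding of definitions together with one invocation of continuity of $A$ to expand $B(v)=\sum_{j}B_{j,k}(v)$ under $A$.
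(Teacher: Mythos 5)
Your proposal is correct and follows essentially the same route as the paper: the same finiteness/vanishing bookkeeping with (M1)--(M3) to show the product entries are finite sums and that the conditions are preserved (your (M3) check pushes $i$ down via (M2) for $A$ where the paper pushes $k$ up via (M1) for $B$, a trivially symmetric variant), and the same use of the convergent expansion $B(v)=\sum_j\pi_j(B(v))$ together with continuity and additivity of $A$ to get the multiplicativity of $A\mapsto(A_{i,j})$.
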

\begin{proof} That $M_{d,\mathbb{Z}}(\mathbb{F}_p)$ is closed under addition
may be easily verified and it has already been noted that the matrix product
of two elements of $M_{d,\mathbb{Z}}(\mathbb{F}_p)$ is defined.  

To see that the product of $(A_{i,j})$ and $(B_{i,j})$ in
$M_{d,\mathbb{Z}}(\mathbb{F}_p)$ satisfies~{\bf(M1)}, consider~$i\in\mathbb{Z}$.
Then there is $J_i$ such that $A_{i,j}=0$ for all $j>J_i$ and there are
$J_B,K_B$ such that $B_{j,k}=0$ whenever $j<J_B$ and $k>K_B$.
Moreover, since $(B_{i,j})$ satisfies~{\bf(M1)}, there is $K$ such that
$B_{j,k}=0$ for all $J_B\leq j\leq J_i$ and $k>K$. Then we have that
$B_{j,k}=0$ for all $j\leq J_i$ and $k>\max\{K,K_B\}$ and so
$\sum_{j\in\mathbb{Z}} A_{i,j}B_{j,k} = 0$ for all such~$k$. Since this holds
for all~$i\in\mathbb{Z}$, the product $(A_{i,j})(B_{i,j})$ satisfies~{\bf(M1)}. 
That $(A_{i,j})(B_{i,j})$ satisfies~{\bf(M3)} may be shown by essentially
the same argument. For this, choose~$I,J$ such that $A_{i,j}=0$ whenever
$i<I$ and $j>J$. Then choose $K$ such that $B_{j,k}=0$ for all
$J_B\leq j\leq J$ and $k>K$. Then $\sum_{j\in\mathbb{Z}} A_{i,j}B_{j,k} = 0$
for all $i<I$ and $k>\max\{K,K_B\}$. 
A similar argument shows that~{\bf(M2)} is also satisfied. 

That $M_{d,\mathbb{Z}}(\mathbb{F}_p)$ satisfies the ring axioms follows
in the usual way now that it has been shown that it is closed under
matrix addition and multiplication.

The map $A\mapsto (A_{i,j})_{i,j\in\mathbb{Z}}$ clearly preserves ring
addition. To see that it also preserves multiplication, note that for
any $A\in\End(\Laurp^d)$ we have
$$
A = \sum_{j\in\mathbb{Z}} A|_{\spacej}\circ \pi_j
$$
pointwise, since $z$ equals the limit
$\sum_{j\in\Z}\pi_j(z)$ of the net of finite partial sums
for each $z\in\Laurp^d$,
and $A$ is a continuous homomorphism.
Hence, for all $(i,k)\in\mathbb{Z}^2$,
\begin{align*}
\pi_i\circ (AB)|_{\spacek} &=
\sum_{j\in\mathbb{Z}}
\pi_i\circ\left(A|_{\spacej}\circ \pi_j\right)\circ B|_{\spacek},
\text{ because } (AB)|_{\spacek} = A\circ B|_{\spacek},\\
&= \sum_{j\in\mathbb{Z}}
\left(\pi_i\circ A|_{\spacej}\right)\circ \left(\pi_j\circ B|_{\spacek}\right),
\text{ by associativity.}
\end{align*}
$\;$\\[-14.5mm]
\end{proof} 
$\;$\\[-1.8mm]
Although not needed here, more can be shown:
\begin{proposition}
\label{prop-isomorphism}
The map $\End(\Laurp^d)\to M_{d,\mathbb{Z}}(\mathbb{F}_p)$,
$A\mapsto (A_{i,j})_{i,j\in\mathbb{Z}}$ is surjective and hence an isomorphism
of rings.
\end{proposition}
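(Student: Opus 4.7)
The plan is as follows. Injectivity of $A\mapsto(A_{i,j})$ was already noted, and Proposition~\ref{prop:MatricesM} gives the ring homomorphism property, so the task reduces to surjectivity: given a block matrix $(A_{i,j})$ satisfying (M1)--(M3), construct a continuous endomorphism $A$ of $\Laurp^d$ whose $(i,j)$-block equals $A_{i,j}$.

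First I would define $A$ on each monomial layer $\spacej\cong\bF_p^d$ by the formal sum
\[
A(v)\;:=\;\sum_{i\in\Z}A_{i,j}(v)\qquad(v\in\spacej),
\]
viewing $A_{i,j}(v)$ as an element of $\mathbb{V}_i$. Condition (M2) guarantees the support in $i$ is bounded below, so this yields a well-defined element of $\Laurp^d$. Next extend to arbitrary $z\in\Laurp^d$ by setting $A(z):=\sum_{j\in\Z}A(\pi_j(z))$, and verify convergence: for any fixed $i$, the contribution to the degree-$i$ coefficient is $\sum_{j}A_{i,j}(\pi_j(z))$, which is finite because $\pi_j(z)$ vanishes for $j$ sufficiently small and, by (M1), $A_{i,j}$ vanishes for $j$ sufficiently large. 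Additivity of $A$ is clear from the construction, and by design $\pi_i\circ A|_{\spacej}=A_{i,j}$, so the matrix of $A$ is the prescribed one.

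The main step is continuity of $A$ at the origin, and this is where all three of (M1)--(M3) are simultaneously required. Given a basic neighbourhood $t^M\Powerp^d$ of $0$, I would invoke (M3) to get $I,J\in\Z$ with $A_{i,j}=0$ whenever $i<I$ and $j>J$, and then apply (M1) to each of the finitely many rows $i\in\{I,I+1,\dots,M-1\}$ to obtain upper bounds $J_i$ on the indices $j$ for which row $i$ can be nonzero. Taking $L$ strictly larger than the maximum of $J$ and the $J_i$ (or simply $L=J+1$ if $M\le I$) then forces $\pi_i(A(z))=0$ for every $i<M$ and every $z\in t^L\Powerp^d$, giving the required preimage neighbourhood. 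This continuity check is the only delicate point; the remaining verifications are immediate, and the ring isomorphism conclusion follows.
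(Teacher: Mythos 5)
Your overall strategy is sound and is genuinely more direct than the paper's: the paper obtains surjectivity as a corollary of the general Lemma~\ref{lemma-make-endos}, applied with $V=\Powerp^d$, $F=\mathbb{V}_0$ and the column series $w_{k+jd}=\sum_{i\in\Z}A_{i,j}\tau^j(e_k)$, so that convergence and continuity are delegated to that lemma (whose continuity proof runs through the initial topology on a compact subgroup with respect to the coordinate maps); you instead build $A$ coefficient-wise and prove continuity at $0$ by a direct neighbourhood estimate from (M1) and (M3). That estimate is correct, and quoting injectivity and Proposition~\ref{prop:MatricesM} for the homomorphism property is fine. The trade-off is that the paper's lemma is reused later (for Proposition~\ref{prop:change_basis}), whereas your argument is self-contained but must redo the convergence and continuity checks by hand --- and that is where the one real problem sits.

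The gap is in the well-definedness of $A(z)$ for $z$ whose support is unbounded above. Your justification --- each degree-$i$ coefficient $\sum_j A_{i,j}(\pi_j(z))$ is a finite sum, by (M1) and the fact that $\pi_j(z)=0$ for $j$ small --- only produces an element of the product $\prod_{i\in\Z}\mathbb{V}_i$; it does not show that these coefficients vanish for all sufficiently negative $i$, i.e.\ that $A(z)$ lies in $\Laurp^d$ (equivalently, that $\sum_j A(\pi_j(z))$ converges in the topology of $\Laurp^d$). This is not automatic and is precisely where (M3) is needed: for $d=1$, the matrix with $A_{-j,j}=1$ for $j\geq 0$ and all other entries $0$ satisfies (M1) and (M2), every coefficient sum is a single term, yet for $z=\sum_{j\geq 0}t^j$ the would-be image has nonzero coefficients in every degree $-j$, so no endomorphism is defined. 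The repair uses exactly the bookkeeping of your continuity step: fix $I,J$ as in (M3); for any $m$, applying (M1) to the finitely many rows $I\leq i<m$ gives $A(\pi_j(z))\in\timestd^m(\Powerp^d)$ for all large $j$, so $A(\pi_j(z))\to 0$ as $j\to\infty$ and the series converges in the complete group $\Laurp^d$ (as in \ref{summands-to-zero}); alternatively, for $i<I$ the coefficient sum runs only over $j\leq J$, and (M2) for those finitely many columns bounds the support of $A(z)$ below. With this point inserted, your proof is complete.
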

Furthermore, the Open
Mapping Theorem,~\cite[Theorem~5.29]{HaR}, implies that any endomorphism
of~$\Laurp^d$ which is a bijection belongs to $\Aut(\Laurp^d)$.\\[2.3mm]
Proposition~\ref{prop-isomorphism}
follows from the next lemma, which will be re-used later.
\begin{numba}
As a preparation, note that
$\timestd \Powerp^d$ has index $p^d$ in the additive group $\Powerp^d$.
More generally,
\[
[V\colon \tau(V)]=p^d
\]
for each compact open subgroup $V\subseteq \Laurp^d$
which is invariant under $\tau:=\timestd$,
as both $[V\colon \tau(V)]$ and $[\Powerp^d\colon \tau(\Powerp^d])=p^d$
equal the module $\Delta(\tau^{-1})$ of the automorphism~$\tau^{-1}$
of $\Laurp^d$ (see, e.g., \cite[Proposition 1.1(e)]{GW}
and its proof).
Considering $V$ as an $\mathbb{F}_p$-vector space,
we find an $\mathbb{F}_p$-vector subspace $F\subseteq V$
such that
\[
V=F \oplus \tau(V).
\]
Since $V/\tau(V)\cong F$ has $p^d$ elements, $F$ has dimension~$p$.
\end{numba}
\begin{numba}\label{summands-to-zero}
If $(z_n)_{n\in\N}$
is a sequence in $\Laurp^d$ such that $z_n\to 0$
as $n\to\infty$, then the series $\sum_{n=0}^\infty z_n$
converges in $\Laurp^d$ (see \cite[Proposition~23.1(ii)]{Sch}),
and so does the net of finite partial sums.
\end{numba}
\begin{lemma}
\label{lemma-make-endos}
Let $V$ be a $\tau$-invariant compact open subgroup
of $\Laurp^d$ and $b_1,\ldots, b_d$ be an $\mathbb{F}_p$-basis
for a vector space complement $F$ of $\tau(V)$ in $V$.
Abbreviate $\Phi:=\Z\times\{1,\ldots,d\}$.
Then the following holds:
\begin{itemize}
\item[\rm(a)]
The vectors
$\tau^j(b_k)$ are linearly independent for $(j,k)\in\Phi$,
and their $\mathbb{F}_p$-linear span
\begin{equation}\label{sumdirect}
\bigoplus_{j\in\Z}\tau^j(F)
\end{equation}
is dense in $\Laurp^d$.
\item[\rm(b)]
Each $z\in \Laurp^d$ can be written as a limit
\begin{equation}\label{elements-sums}
z=\sum_{(j,k)\in\Phi}n_{j,k}\, \tau^j(b_k)
\end{equation}
of finite partial sums with unique $n_{j,k}\in\mathbb{F}_p$ such that,
for some $j_0\in \Z$,
\[
n_{j,1}=\cdots=n_{j,d}=0\quad\mbox{for all $\,j<j_0$.}
\]
Given $m\in\Z$, we have $z\in \tau^m(V)$ if and only if
we can choose $j_0\geq m$.
\item[\rm(c)]
For all $(j,k)\in\Phi$, the map $\pi_{j,k}\colon
\Laurp^d\to\mathbb{F}_p$, $z\mapsto n_{j,k}$
is a continuous homomorphism.
\item[\rm(d)]
If $(w_n)_{n\in\Z}$ is a sequence in $\Laurp^d$ with
$w_n\to 0$ as $n\to\infty$, then
\[
\phi(z):=\sum_{(j,k)\in\Phi}n_{j,k}\,w_{k+jd}
\]
converges in $\Laurp^d$ for each $z\in\Laurp^d$ as in
{\rm\eqref{elements-sums}}, and defines an endomorphism
$\phi$ of the topological group~$\Laurp^d$.
\end{itemize}
\end{lemma}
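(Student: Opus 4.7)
The plan is to exploit throughout the iterated decomposition
$$V=F\oplus\tau(F)\oplus\cdots\oplus\tau^N(F)\oplus\tau^{N+1}(V),$$
obtained from the given $V=F\oplus\tau(V)$, together with two consequences of $\tau=\timestd$ being a contractive automorphism: $\tau^N(V)\to\{0\}$ as $N\to\infty$ (so the remainder above is eventually small), and $\Laurp^d=\bigcup_{m\geq 0}\tau^{-m}(V)$ (so any $z\in\Laurp^d$ can be shifted into $V$). Convergence of the series in (b) and (d) will be controlled by the ultrametric criterion \ref{summands-to-zero}.

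For (a), the iterated decomposition shows that $\bigoplus_{j=0}^N \tau^j(F)$ is a set of coset representatives for $\tau^{N+1}(V)$ in $V$; letting $N\to\infty$ yields density of $\bigoplus_{j\geq 0}\tau^j(F)$ in $V$, and applying $\tau^{-m}$ and taking the union over $m$ gives density of $\bigoplus_{j\in\Z}\tau^j(F)$ in $\Laurp^d$. For linear independence, in any finite relation $\sum n_{j,k}\tau^j(b_k)=0$ with smallest index $j_{\min}$, applying $\tau^{-j_{\min}}$ puts the identity in $V$, and reduction modulo $\tau(V)$ leaves $\sum_k n_{j_{\min},k}b_k=0$; since $b_1,\ldots,b_d$ is a basis of the complement $F\cong V/\tau(V)$, all $n_{j_{\min},k}$ vanish, contradicting the choice of $j_{\min}$.

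For (b), existence: given $z$, choose $m$ with $\tau^m(z)\in V$, iterate the decomposition on $\tau^m(z)$ to obtain a partial sum whose remainder lies in $\tau^{N+1}(V)\to 0$, and apply the continuous map $\tau^{-m}$ to the limit. Uniqueness is the delicate point: given a convergent $\sum n_{j,k}\tau^j(b_k)=0$ with $n_{j,k}=0$ for $j<j_0$, the partial sums shifted by $\tau^{-j_0}$ lie in~$V$ and tend to~$0$; their images in the discrete quotient $V/\tau(V)$ therefore eventually vanish, forcing $\sum_k n_{j_0,k}b_k=0$, and induction on $j_0$ kills all coefficients. The characterization of $\tau^m(V)$ follows: sufficiency from closedness of $\tau^m(V)$ containing all partial sums, necessity by applying uniqueness to the expansion of $z$ built from the $V$-expansion of $\tau^{-m}(z)$.

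For (c), $\pi_{j,k}$ is a homomorphism by uniqueness in (b) applied to $z+z'$, and continuous because by (b) it vanishes on the open neighborhood $\tau^{j+1}(V)$ of $0$. For (d), convergence of $\sum n_{j,k}w_{k+jd}$ follows from \ref{summands-to-zero}: the nonzero summands occur for $(j,k)\in\Phi$ with $j\geq j_0$, so the index $k+jd$ is bounded below and tends to $+\infty$, whence $w_{k+jd}\to 0$. Additivity of $\phi$ follows from uniqueness in (b) applied to $z+z'$ together with $\mathbb{F}_p$-linearity of the sum. For continuity at $0$, given a basic open subgroup $U\subseteq\Laurp^d$, pick $M$ with $w_n\in U$ for $n\geq M$ and then $m\in\Z$ with $md+1\geq M$; for $z\in\tau^m(V)$ every summand defining $\phi(z)$ lies in $U$, and since $U$ is closed, so does $\phi(z)$.

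The main obstacle I expect is the uniqueness assertion in~(b). The $n_{j,k}$ are defined by a convergent rather than finite linear combination, so the purely algebraic minimum-index argument from~(a) does not apply directly; the key device is that $\tau(V)$ is open in~$V$, making $V/\tau(V)$ discrete, which converts the topological statement ``the partial sums tend to~$0$'' into the finite-combinatorial statement ``their images in $V/\tau(V)$ are eventually zero.'' This same openness argument reappears in the continuity proofs in~(c) and~(d), and is the real engine of the whole lemma.
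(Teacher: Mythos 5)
Your proof is correct and for parts (a), (b), (c) follows essentially the same strategy as the paper: iterate the decomposition $V=F\oplus\tau(V)$, use the contractivity of $\tau$ for density, and exploit openness of $\tau(V)$ in $V$ for the discrete-quotient arguments that drive uniqueness and continuity. The one point of genuine divergence is the continuity of $\phi$ in (d). You argue directly: given a compact open $U\subseteq\Laurp^d$, pick $m$ with $w_n\in U$ for all $n\geq md+1$; then for $z\in\tau^m(V)$ every term $n_{j,k}w_{k+jd}$ in the expansion lies in $U$, so all partial sums lie in the (closed) subgroup $U$ and hence so does $\phi(z)$, i.e.\ $\phi(\tau^m(V))\subseteq U$. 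This is a cleaner and more elementary verification of continuity at $0$ than the paper's, which first shows $\phi(V)$ lands in a fixed compact open $Q$ and then invokes the fact that the topology on a compact set is the initial topology with respect to the point-separating coordinate maps $\pi_j|_Q$, reducing to coordinatewise continuity. Both are valid; your version avoids the initial-topology machinery entirely and is arguably the better proof. One cosmetic remark: in (c) you derive continuity of $\pi_{j,k}$ from the inclusion $\tau^{j+1}(V)\subseteq\ker\pi_{j,k}$ (a homomorphism to a discrete group with open kernel is continuous), whereas the paper identifies $\pi_{j,k}|_{\tau^j(V)}$ with a component of the quotient map $\tau^j(V)\to\tau^j(V)/\tau^{j+1}(V)$ --- the same idea, differently phrased.
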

\begin{proof}
(a) and (b): Let $j_0\in\Z$. For each integer $j\geq j_0$,
we have
\[
\tau^j(V)=\tau^j(F)\oplus \tau^{j+1}(V)
\]
and $\tau^j(b_1),\ldots,\tau^j(b_d)$ is an $\mathbb{F}_p$-basis
for $\tau^j(F)$. By induction,
\[
\tau^{j_0}(V)=\tau^{j_0}(F)\oplus\cdots\oplus\tau^j(F)\oplus \tau^{j+1}(V)
\]
for all $j\geq j_0$. As the sum $\tau^{j_0}(F)\oplus\cdots\oplus\tau^j(F)$
is direct for all $j,j_0\in\Z$ with $j\geq j_0$,
also the sum in \eqref{sumdirect}
is direct, and $(\tau^j(b_k))_{(j,k)\in\Phi}$
is a basis for the latter.
If $j_0\in \Z$ and $z\in\tau^{j_0}(V)$,
we recursively find $n_{j,1},\ldots,n_{j,d}\in\mathbb{F}_p$
for integers $j\geq j_0$ such
that
\[
z+\tau^{j+1}(V)=\sum_{i=j_0}^j\sum_{k=1}^dn_{i,k}\,\tau^i(b_k)+\tau^{j+1}(V).
\]
In fact, $z-\sum_{i=1}^j\sum_{k=1}^dn_{i,k}\,\tau^i(b_k)$
is in $\tau^{j+1}(V)$ and hence in
\[
\sum_{k=1}^dn_{j+1,k}\,\tau^{j+1}(b_k)+\tau^{j+2}(V)
\]
for suitable $n_{j+1,1},\ldots,n_{j+1,d}\in\mathbb{F}_p$,
as $\tau^{j+1}(V)=\tau^{j+1}(F)\oplus\tau^{j+2}(V)$
and $\tau^{j+1}(b_1),\ldots,\tau^{j+1}(b_d)$ is an $\mathbb{F}_p$-basis
for $\tau^{j+1}(F)$. Set $n_{j,k}:=0$ for $j<j_0$. Then
\[
z=\sum_{(j,k)\in\Phi}n_{j,k}\,\tau^j(b_k),
\]
as $z-\sum_{(j,k)\in I}n_{j,k}\tau^j(b_k)\in \tau^{j+1}(V)$ for each
finite subset $I\sub \Phi$ such that $\{j_0,\ldots,j\}\times\{1,\ldots,d\}\sub I$.
If also
\[
z=\sum_{(j,k)\in \Phi} m_{j,k}\,\tau^j(b_k)
\]
with zero coefficients for $j<j_1$, we let $J:=\min\{j_0,j_1\}$.
For $j\geq J$, we have
\[
z+\tau^{j+1}(V)=\sum_{i=J}^j\sum_{k=1}^dn_{j,k}\tau^j(b_k)+\tau^{j+1}(V)=
\sum_{i=J}^j\sum_{k=1}^d m_{j,k}\tau^j(b_k)+\tau^{j+1}(V)
\]
in $\tau^J(V)=\tau^J(F)\oplus \cdots\oplus \tau^j(F)\oplus \tau^{j+1}(V)$
and deduce that
$n_{j,k}=m_{j,k}$ for all $j\in\{J,\ldots,j\}$ and $k\in\{1,\ldots,d\}$.
The coefficients are thus unique. The representation \eqref{elements-sums}
implies the density of the direct sum \eqref{sumdirect} in $\Laurp^d$.
The construction shows that elements in $\tau^{j_0}(V)$
have coefficients $n_{j,k}=0$ for $j<j_0$.
Conversely, the sum \eqref{elements-sums} is in the closed $\tau$-invariant
subgroup $\tau^{j_0}(V)$ for such coefficients.

(c) It is easy to see that $\pi_{j,k}$ is a homomorphism of groups.
Therefore $\pi_{j,k}$ will be continuous if its restriction to $\tau^j(V)$
is continuous. But this map can be identified with the $k$th component
of the continuous quotient map
\[
\tau^j(V)\to \tau^j(V)/\tau^{j-1}(V)\cong \tau^j(F)
\cong \bigoplus_{k=1}^d\mathbb{F}_p\tau^j(b_j)\cong\mathbb{F}_p^d.
\]

(d) Let $\cF$ be the set of finite subsets of $\Phi$, directed via inclusion.
Given $z\in\Laurp^d$ and $j_0$ as in~\eqref{elements-sums},
write
\[
S_{z,I}:=
\sum_{(j,k)\in I}n_{j,k}\,w_{k+jd}
\]
for $I\in\cF$.
Given a compact open subgroup $Q\sub \Laurp^d$,
there exists $n_0\in\Z$ such that $w_n\in Q$ for all $n\geq n_0$.
There is $j_1\geq j_0$ such that $j_1d \geq n_0$.
Then
\[
S_{z,I}\in Q
\]
for all $I\in\cF$ such that $I\cap(\{j_0,\ldots,j_1\}\times \{1,\ldots, d\})=\emptyset$,
showing that $(S_{z,I})_{I\in\cF}$ is a Cauchy net in $\Laurp^d$
and thus convergent. We write $\phi(z)$ for the limit.
As each $S_{z,I}$ is additive in~$z$ and addition in $\Laurp^d$
is continuous, also $\phi(z)$ is additive in~$z$.
Thus $\phi$ is a homomorphism of groups.
To see that $\phi$ is continuous, it suffices to prove its
continuity on~$V$. There is $n_0\in \N$
such that $w_n\in \Powerp^d$ for all $n>n_0$.
There is an integer $\ell\leq 0$ such that
$w_0,\ldots,w_{n_0}\in \timestd^\ell\Powerp^d=:Q$.
Then~$Q$ is a compact open subgroup of $\Laurp^d$
such that $w_n\in Q$ for all $n\geq 0$,
and thus
\[
\phi(z)\in Q\quad\mbox{for all $\,z\in V$.}
\]
Since $Q$ is compact, the initial topology
on~$Q$ with respect to the
continuous homomorphisms $\pi_j|_Q$ for integers $j\geq \ell$,
which separate points, coincides with the given
topology on~$Q$. Hence $\phi|_V^Q$ will be continuous
if $\pi_j\circ \phi|_V$ is continuous
for each $j\geq \ell$. Given~$j$, there is $m_0\geq 0$ such that
$w_n\in \timestd^{j+1}\Powerp^d$ for all $n > m_0$ and thus
$\pi_j(w_n)=0$. There is an integer $M\geq 0$ such that
$(M+1)d\geq m_0$. Then
\[
(\pi_j\circ\phi)(z)=\sum_{m=0}^M\sum_{k=1}^d\pi_{m,k}(z)\,\pi_j(w_{k+md})
\]
for all $z\in V$, which is a continuous function of~$z\in V$.
\end{proof}
{\bf Proof of Proposition~\ref{prop-isomorphism}.}
As before, abbreviate $\tau:=\timestd$.
Let $(A_{i,j})_{i,j\in\Z}
\in M_{d,\mathbb{Z}}(\mathbb{F}_p)$.
Using Lemma~\ref{lemma-make-endos}
with $V:=\Powerp^d$, $F:=\mathbb{V}_0$,
the standard $\mathbb{F}_p$-basis
\begin{equation}\label{standard-basis}
e_1=(1,0,\ldots,0),\ldots, e_d=(0,\ldots,0,1)
\end{equation}
of $\mathbb{V}_0$ in place of $b_1,\ldots, b_d$
and
\[
w_{k+jd}:=\sum_{i\in\Z} A_{i,j}\tau^j(e_k)\in \Laurp^d
\]
for $(j,k)\in\Phi$
(which converges by (M2) and \ref{summands-to-zero}),
we get an endomorphism $\phi$ of $\Laurp^d$~which satisfies, for $j\in\Z$ and $z=
n_{j,1}\,\tau^j(e_1)+\cdots+ n_{j,d}\,\tau^j(e_d)\in\mathbb{V}_j$,
\[
\phi(z)
=\sum_{k=1}^d n_{j,k}\,w_{k+jd}
=\sum_{i\in\Z}A_{i,j}(z).
\]
Applying the continuous homomorphism $\pi_i$ for $i\in\Z$,
we obtain
$\pi_i(\phi(z))=A_{i,j}(z)$, whence $\pi_i\circ \phi|_{\mathbb{V}_j}=A_{i,j}$.
Thus $A\mapsto (A_{i,j})_{i,j\in\Z}$ is surjective.\hfill{\small$\square$}\\[2.3mm]
The next result
allows the basis for the matrix representation described
in Propositions~\ref{prop:block_matrix} and~\ref{prop:MatricesM}
to be changed.

\begin{proposition} 
\label{prop:change_basis} 
Suppose that $V$ is a compact open subgroup of $\Laurp^d$ such that
$\timestd(V) < V$. Then there is an automorphism, $\theta$, of $\Laurp^d$
such that $\theta\circ \timestd = \timestd\circ\, \theta$ and
$\theta(\Powerp^d) = V$.\end{proposition}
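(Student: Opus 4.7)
The plan is to apply Lemma~\ref{lemma-make-endos}(d) twice: once to construct a candidate $\theta$, and once to construct an explicit two-sided inverse. Write $\tau:=\timestd$, and, as in the paragraph preceding Lemma~\ref{lemma-make-endos}, fix an $\mathbb{F}_p$-basis $b_1,\ldots,b_d$ of a vector space complement $F$ of $\tau(V)$ in~$V$ (this is exactly where the hypothesis $\tau(V)<V$ is used, to guarantee that such a basis of size~$d$ exists).

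For the forward direction, I would apply Lemma~\ref{lemma-make-endos}(d) with $V:=\Powerp^d$, $F:=\mathbb{V}_0$, and the standard basis $e_1,\ldots,e_d$ from~\eqref{standard-basis} in place of $b_1,\ldots,b_d$, setting $w_{k+jd}:=\tau^j(b_k)$. The convergence hypothesis $w_n\to 0$ is immediate: each $b_k$ lies in the compact set $V$ and $\tau$ is contractive. This produces a continuous endomorphism $\theta$ of $\Laurp^d$ satisfying $\theta(\tau^j(e_k))=\tau^j(b_k)$ for all $(j,k)\in\Phi$. A mirror application with the given~$V$ and~$F$ and $w_{k+jd}:=\tau^j(e_k)$ produces a continuous endomorphism $\theta'$ satisfying $\theta'(\tau^j(b_k))=\tau^j(e_k)$.

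By Lemma~\ref{lemma-make-endos}(a) the two families $\{\tau^j(e_k)\}$ and $\{\tau^j(b_k)\}$ span dense subgroups of $\Laurp^d$. Since $\theta\circ\theta'$ and $\theta'\circ\theta$ are continuous endomorphisms that restrict to the identity on these respective dense subgroups, both compositions equal $\id$, and hence $\theta$ is a bicontinuous group automorphism of $\Laurp^d$. For the intertwining $\theta\circ\tau=\tau\circ\theta$, I would write a general $z\in\Laurp^d$ in the form~\eqref{elements-sums} with respect to the basis $\tau^j(e_k)$, observe that $\tau(z)$ has coefficients $n_{j-1,k}$, and reindex $j\mapsto j+1$ in the defining formula for~$\theta$, using continuity of~$\tau$ to pass from the finite partial sums to the limit. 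Lastly, Lemma~\ref{lemma-make-endos}(b) characterises $\Powerp^d$, respectively~$V$, as the set of elements whose coefficients in the basis $\{\tau^j(e_k)\}$, respectively $\{\tau^j(b_k)\}$, vanish for $j<0$; since $\theta$ converts one description into the other, $\theta(\Powerp^d)=V$.

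The main (and essentially only) difficulty I anticipate is bookkeeping, specifically verifying that the two applications of Lemma~\ref{lemma-make-endos}(d) really do produce mutually inverse continuous maps. All of the analytic subtleties --- convergence of the defining series, continuity of the resulting endomorphism, uniqueness of the coefficient representation --- are already encapsulated in that lemma, so the proof reduces to symbolic manipulation on the two preferred bases.
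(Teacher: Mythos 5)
Your proposal is correct and follows essentially the same route as the paper's own proof: two dual applications of Lemma~\ref{lemma-make-endos}(d) to build $\theta$ and its inverse, with the remaining claims verified on the dense subgroups produced by part~(a). The only cosmetic difference is in showing $\theta(\Powerp^d)=V$ --- you invoke the coefficient characterization in part~(b) directly, while the paper argues via compactness of the image containing the closed subgroup generated by the $\tau^j(b_k)$; both are valid and rest on the same facts.
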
 
%
\begin{proof}
Abbreviate $\tau:=\timestd$
and $\Phi:=\Z\times\{1,\ldots, d\}$.
Let $F\sub V$ be an $\mathbb{F}_p$-vector space such that
$V=F\oplus \tau(V)$
and $b_1,\ldots, b_d$
be an $\mathbb{F}_p$-basis for~$F$.
Using Lemma~\ref{lemma-make-endos}
with $w_{k+jd}:=\tau^j(e_k)$
for $(j,k)\in\Phi$
(where $e_1,\ldots, e_d\in\mathbb{V}_0$
is the standard basis),
we obtain an endomorphism $\phi$ of $\Laurp^d$
such that
\[
\phi(\tau^j(b_k))=\tau^j(e_k)\quad\mbox{for all $(j,k)\in\Phi$.}
\]
Using the lemma again with $\Powerp^d$
in place of~$V$, the complement $\mathbb{V}_0$ and its basis $e_1,\ldots, e_d$
in place of~$F$ and $b_1,\ldots, b_d$, respectively,
and using $w_{k+jd}:=\tau^j(b_k)$,
we obtain an endomorphism $\theta$ of $\Laurp^d$ such that
\[
\theta(\tau^j(e_k))=\tau^j(b_k)\quad\mbox{for all $(j,k)\in\Phi$.}
\]
Then $\theta\circ\phi$ is an endomorphism such that
\[
(\theta\circ\phi)(z)=z
\]
for all $z=\tau^j(b_k)$ with $(j,k)\in\Phi$.
As these elements generate a dense subgroup of $\Laurp^d$
by Lemma~\ref{lemma-make-endos}(a),
we deduce that $\theta\circ \phi=\id$.
Likewise, $\phi\circ\theta=\id$, whence $\theta$ is an automorphism
with $\theta^{-1}=\phi$.
Since
\[
\theta(\tau(z))=\theta(\tau^{j+1}(e_k))=\tau^{j+1}(b_k)=\tau(\theta(z))
\]
for all $z=\tau^j(e_k)$ with $(j,k)\in\Phi$,
arguing as before we get $\theta\circ \tau=\tau\circ \theta$.\\[2.3mm]
Since $\theta(\tau^j(e_k))=\tau^j(b_k)\in V$
for all $j\in\N$ and $k\in\{1,\ldots,d\}$,
we have $\theta(\Powerp^d)\sub V$.
As the image is compact,
it contains the closed subgroup of~$V$ generated by the $\tau^j(b_k)$
and hence coincides with~$V$.
\end{proof}

\subsubsection*{Homomorphisms from $\Laurp$ to $\Aut(\Laurp^d)$}

Let~$\phi : \Laurp\to \Aut(\Laurp^d)$ be a continuous homomorphism which satisfies~\eqref{eq:phi_identity}. Additional conditions satisfied by the $(d\times d)$-block matrices representing $\phi(f)$ for $f\in\Laurp$ are established now.

The next result identifies a subgroup,~$V$, such that, after the change of basis provided in Proposition~\ref{prop:change_basis}, the matrix representation of the image of~$\phi$ has a simple form.
We recall terminology.
\begin{numba}
Let $\tau$ be an automorphism of
a totally disconnected, locally compact group~$G$.
If $V$ is a compact open subgroup of~$G$, define
\[
V_+:=\bigcap_{n=0}^\infty \tau^n(V)\quad\mbox{and}\quad
V_-:=\bigcap_{n=0}^\infty\tau^{-n}(V).
\]
Then $\tau(V_-)\subseteq V_-$. The subgroup~$V$
is called \emph{tidy above} for~$\tau$ if
\[
V=V_+V_-
\]
({\it cf.\/}~\cite{GWil}).
If $\tau$ is a contractive automorphism,
$U\subseteq G$ an identity neighbourhood
and $K$ a compact subset of~$G$,
then there exists a positive integer~$n_0$
such that $\tau^n(K)\subseteq U$
for all $n\geq n_0$ (see
\cite[Lemma~1.4(iv)]{Sie}
or \cite[Proposition~2.1]{Wang}).
Hence $V_+=\{e\}$ in this case and thus $V$ is tidy above
if and only if $V=V_-$, which holds
if and only if $\tau(V)\subseteq V$.
\end{numba}
\begin{proposition}
\label{prop:invariant}
There is a compact open subgroup $V\leq \Laurp^d$ such that 
$$
\phi(f)(V) = V \mbox{ for every }f\in \Powerp\mbox{ and }\timestd(V) < V.
$$
\end{proposition}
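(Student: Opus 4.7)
The plan is to exploit the compactness of $\Powerp$: continuity of $\phi$ makes $\phi(\Powerp)$ a compact subgroup of $\Aut(\Laurp^d)$ in the Braconnier topology, so a standard orbit-summation gives a $\phi(\Powerp)$-invariant compact open subgroup. The identity~\eqref{eq:phi_identity} then forces that subgroup to be automatically $\timestd$-forward-stable, at which point the strict containment $\timestd(V)<V$ follows from the module calculation recorded before Lemma~\ref{lemma-make-endos}.

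In detail, I would first observe that the Braconnier neighbourhood with parameters $K=U=\Powerp^d$ is contained in $\{\alpha\in\Aut(\Laurp^d):\alpha(\Powerp^d)=\Powerp^d\}$, so the stabilizer of $\Powerp^d$ is open in $\Aut(\Laurp^d)$. Its intersection with the compact group $\phi(\Powerp)$ is therefore open of finite index, so the orbit $\phi(\Powerp)\cdot\Powerp^d$ consists of finitely many compact open subgroups $\phi(g_1)(\Powerp^d),\ldots,\phi(g_m)(\Powerp^d)$ with $g_1,\ldots,g_m\in\Powerp$.

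Next I set $V:=\sum_{i=1}^m \phi(g_i)(\Powerp^d)$. This is compact (a finite sum of compact subgroups is the continuous image of their product under addition) and open (it contains $\Powerp^d$). It is $\phi(\Powerp)$-invariant by construction, since left multiplication by any $\beta\in\phi(\Powerp)$ permutes the orbit and therefore permutes the set of summands. For the forward-stability, \eqref{eq:phi_identity} rewrites as $\timestd\circ\phi(g)=\phi(tg)\circ\timestd$, so
\[
\timestd(V)=\sum_{i=1}^m \phi(tg_i)(t\Powerp^d)\subseteq\sum_{i=1}^m \phi(tg_i)(\Powerp^d)\subseteq V,
\]
the last inclusion holding because each $tg_i$ still lies in $\Powerp$ and $V$ is $\phi(\Powerp)$-invariant. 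The strict containment $\timestd(V)<V$ is then immediate, as $[V:\timestd(V)]=p^d>1$ for every $\timestd$-forward-stable compact open subgroup of $\Laurp^d$.

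The only conceptual subtlety, and what makes this orbit-sum construction work without iteration, is the asymmetry in~\eqref{eq:phi_identity}: $\timestd$ does not normalize $\phi(\Powerp)$ but only conjugates it \emph{into} itself, since $t\Powerp$ is a proper subgroup of~$\Powerp$. I do not expect any further refinement of $V$ to be necessary.
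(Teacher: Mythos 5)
Your proposal is correct, but it takes a genuinely different route from the paper at the decisive step. Both arguments start the same way: the stabilizer $\mathcal{N}=\{\alpha\in\Aut(\Laurp^d):\alpha(\Powerp^d)=\Powerp^d\}$ is an open subgroup for the Braconnier topology, and a finiteness argument (yours via compactness of $\phi(\Powerp)$ and finite index of $\phi(\Powerp)\cap\mathcal{N}$; the paper's via $\phi(t^m\Powerp)\subseteq\mathcal{N}$ and finiteness of $\Powerp/t^m\Powerp$) reduces everything to finitely many subgroups $\phi(g_i)(\Powerp^d)$. The paper then takes the \emph{intersection} $V=\bigcap_{f\in\Powerp}\phi(f)(\Powerp^d)$, which is $\phi(\Powerp)$-invariant but for which identity~\eqref{eq:phi_identity} pushes the wrong way (one only gets $\timestd(V)$ inside the intersection over the smaller set $\phi(t\Powerp)$-translates), so an extra tidying step is needed: by \cite[Lemma~1]{GWil} some $\bigcap_{k=0}^K(\timestd)^{-k}(V)$ is tidy above, hence $\timestd$-invariant since $\timestd$ is contractive, and each $(\timestd)^{-k}(V)$ stays $\phi(\Powerp)$-invariant thanks to~\eqref{eq:phi_identity}. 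You instead take the \emph{sum} over the finite orbit, and there the identity $\timestd\circ\phi(g)=\phi(tg)\circ\timestd$ together with $tg_i\in\Powerp$, $\Powerp^d\subseteq V$ and $\phi(\Powerp)$-invariance gives $\timestd(V)\subseteq V$ directly, so no appeal to the tidying lemma is required; strictness then follows from $[V:\timestd(V)]=p^d$ (or simply from contractivity of $\timestd$). Your closing observation correctly identifies why this works: $\ad(\timestd)$ maps $\phi(\Powerp)$ into itself, and sums, unlike intersections, are monotone in the right direction under this one-sided conjugation. The trade-offs are minor: your construction is more self-contained, while the paper's $V$ sits inside $\Powerp^d$ rather than containing it; neither difference affects the statement.
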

\begin{proof}
Since $U := \Powerp^d$ is a compact and open subset of~$\Laurp^d$, 
$$
\mathcal{N} := \left\{ \alpha\in \Aut(\Laurp^d) \mid \alpha(U) = U\right\}
$$ 
is a neighbourhood of the identity in $\Aut(\Laurp^d)$ under the Braconnier topology. Since~$\phi$ is continuous, there is $m\geq0$ such that
$\phi(t^m\cdot\Powerp)\subseteq\mathcal{N}$ and then, since $\Powerp/(t^m\cdot\Powerp)$ is finite, 
$$
V := \bigcap \left\{ \phi(f)(U) \mid f\in \Powerp\right\}
$$
is a finite intersection. Then~$V$ is a compact open subgroup of $\Laurp^d$ and is invariant under $\phi(\Powerp)$. 

Since $\timest(\Powerp)\leq \Powerp$, the subgroup~$V$ is invariant under $\phi(\timest(\Powerp))$. Hence, since~$\phi$ satisfies~\eqref{eq:phi_identity}, $(\timestd)^{-1}(V)$ is invariant under~$\phi(\Powerp)$ and, iterating, $(\timestd)^{-k}(V)$ is invariant under~$\phi(\Powerp)$ for all $k\geq0$. By~\cite[Lemma~1]{GWil}, there is $K\geq0$ such that $\bigcap_{k=0}^K (\timestd)^{-k}(V)$ is tidy above for $\timestd$ and this intersection too is invariant under $\phi(\Powerp)$. Since $\timestd$ is a contraction on $\Laurp^d$, tidiness above for~$\timestd$ amounts to being invariant under~$\timestd$. Hence, replacing~$V$ with $\bigcap_{k=0}^K (\timestd)^{-k}(V)$, yields that~$V$ is invariant under~$\phi(\Powerp)$ and $\timestd(V)<V$. 
\end{proof}

Next, the condition that $\phi\circ\timest  = \ad(\timest)\circ \phi$ is translated to the matrix representation.
\begin{lemma}
\label{lem:matrix_condition}
For every~$f\in\Laurp$ and $(i,j)\in \mathbb{Z}^2$,
$$
\phi(\timest f)_{i,j} = \phi(f)_{i-1,j-1}.
$$ 
\end{lemma}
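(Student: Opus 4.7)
The plan is to unwind both sides using only the definition of the block entries $A_{i,j}$ and the hypothesis~\eqref{eq:phi_identity}. Write $\tau:=\timestd$ for brevity. The identity~\eqref{eq:phi_identity} rewrites as
\[
\phi(tf) \;=\; \tau\circ \phi(f)\circ \tau^{-1}
\]
for every $f\in\Laurp$, so the problem is to track how the conjugation by $\tau$ interacts with the grading $\Laurp^d=\overline{\bigoplus_{n\in\Z}\spacen}$.

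The essential observation is that $\tau$ is homogeneous of degree~$+1$: it restricts to a bijection $\tau\colon\spacen\to\mathbb{V}_{n+1}$ sending $(a_1t^n,\ldots,a_dt^n)$ to $(a_1t^{n+1},\ldots,a_dt^{n+1})$. Under the canonical identifications $\spacen\cong\bF_p^d$ coming from the basis of monomials, this restricted map is the identity on $\bF_p^d$. Consequently one has the commutation relations
\[
\pi_i\circ\tau \;=\; \tau|_{\mathbb{V}_{i-1}}\circ \pi_{i-1}\quad\text{and}\quad \tau^{-1}(\mathbb{V}_j)=\mathbb{V}_{j-1},
\]
and the identifications $\spacen\cong\bF_p^d$ intertwine $\tau$ with the identity.

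With these in hand, the computation is a one-line chain. First, restrict $\phi(tf)=\tau\circ\phi(f)\circ\tau^{-1}$ to $\spacej$ and compose with~$\pi_i$:
\[
\pi_i\circ\phi(tf)|_{\spacej}
\;=\;\pi_i\circ\tau\circ\phi(f)\circ\tau^{-1}|_{\spacej}
\;=\;\tau\circ\pi_{i-1}\circ\phi(f)\circ\tau^{-1}|_{\spacej}.
\]
Now $\tau^{-1}|_{\spacej}\colon\spacej\to\spacejm$ and the outer $\tau\colon\mathbb{V}_{i-1}\to\spacen[i]$ both correspond to the identity of $\bF_p^d$ under the identifications that define the block entries. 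Therefore, after transporting via these isomorphisms, the right-hand side becomes $\pi_{i-1}\circ\phi(f)|_{\spacejm}=\phi(f)_{i-1,j-1}$ and the left-hand side is $\phi(tf)_{i,j}$ by definition.

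There is no real obstacle here; the only thing to be careful about is the bookkeeping of the identifications $\spacen\cong\bF_p^d$ that are suppressed in the notation $A_{i,j}$. Once one writes $\tau$ explicitly on each graded piece and notices that it becomes the identity under these identifications, the claimed equality is immediate.
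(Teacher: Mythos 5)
Your proof is correct and follows essentially the same route as the paper: restrict the identity $\phi(tf)=\timestd\circ\phi(f)\circ\timestd^{-1}$ to $\spacej$, apply $\pi_i$, and use that $\timestd$ shifts the grading by one while acting as the identity under the identifications $\spacen\cong\mathbb{F}_p^d$. The paper's proof is just a terser version of the same two-line computation, leaving the bookkeeping of the identifications implicit where you spell it out.
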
 
\begin{proof}
By definition of~$\ad$ and because $\phi\circ\timest  = \ad(\timest)\circ \phi$, 
\begin{align*}
\pi_i\circ\phi(t\cdot f)|_{\spacej} &= \pi_i\circ (\timestd\circ(\phi(f)\circ \timestd^{-1})|_{\spacej}\\
&= \pi_{i-1}\circ \phi(f)|_{\spacejm}.
\end{align*}
$\;$\\[-13mm]
\end{proof}

Propositions~\ref{prop:invariant} and~\ref{prop:change_basis} imply that it may be assumed that $\Powerp^d$ is invariant under automorphisms in $\phi(\Powerp)$. Hence
\begin{equation}
\label{eq:entry_zero1}
\pi_i \circ \phi(f)|_{\spacej} = 0 \mbox{ for all }f\in \Powerp,\ i<0\mbox{ and }j\geq0.
\end{equation}
Moreover, if $f\in \Powerp$, then so is $t\cdot f$ and thus
Lemma~\ref{lem:matrix_condition} implies that
\begin{equation}
\label{eq:entry_zero2}
\pi_i \circ \phi(f)|_{\spacej} = 0 \mbox{ for all }f\in \Powerp,\ i<0\mbox{ and }j> i.
\end{equation}
Note that the evaluation map
\[
\Aut(\Laurp^d)\times\Laurp^d\to \Laurp^d,\quad
(\alpha,z)\mto \alpha(z)
\]
is continuous as the Braconnier topology is finer than the
compact-open topology and $\Laurp^d$ is locally compact
(see \cite[Lemma~9.8]{Str}).
Therefore, the map
\[
h_i\colon \Laurp \times \Laurp^d\to \mathbb{V}_i,\quad
(f,z)\mto \pi_i(\phi(f)(z))
\]
is continuous for all $i\in\Z$. Since $\mathbb{V}_i$ is
discrete, it follows that $h_i^{-1}(\{0\})$
is open in $\Laurp \times\Laurp^d$.
Since $\alpha(0)=0$ for all $\alpha\in\Aut(\Laurp^d)$, we have
\[
\Powerp \times \{0\}\sub h_i^{-1}(\{0\}).
\]
As both factors on the left-hand side are compact,
using the Wallace Theorem (see \cite{Kel})
we find an open $0$-neighbourhood
$U\sub \Laurp^d$ such that $\Powerp\times U\sub h_i^{-1}(\{0\})$.
There is $J_i\in\Z$ such that $\timestd^{J_i}\Powerp^d\sub U$.
Then
\begin{equation}\label{simultaneously}
\pi_i\circ \phi(f)|_{\mathbb{V}_j}=0
\mbox{ for all $f\in\Powerp$ and $j\geq J_i$.}
\end{equation}
Improving on~\eqref{eq:entry_zero2}, it will be seen next that, when $f \in \Powerp$ and $i<0$, it occurs that~$\pi_i \circ \phi(f)|_{\spacej} = 0$ for some values of $j<i$. 

For this, note that
\[
Q_m:=\{\alpha\in \Aut(\Laurp^d)\colon (\alpha-\IdA)(\Powerp^d)\subseteq \timestd^m(\Powerp^d)\}
\]
is an identity neighbourhood in $\Aut(\Laurp^d)$ for each $m\in \Z$,
as the Braconnier topology on $\Aut(\Laurp^d)$
is finer than the compact-open topology, which coincides with the
topology induced
on $\Aut(\Laurp^d)$ by the topology of compact convergence
on $(C(\Laurp^d,\Laurp^d),+)$. Since $\phi$ is continuous and $\phi(0)=\IdA$,
the pre-image $\phi^{-1}(Q_m)$ is a $0$-neighbourhood in $\Laurp$.
We therefore find $N_m\in \Z$ such that,
for every $g\in t^{N_m}\Powerp$,
\begin{equation}\label{pre-step}
\phi(g)(\zz) \in \zz + (\timestd)^m(\Powerp^d) \mbox{ for every }\zz\in \Powerp^d.
\end{equation}
For all $f\in \Powerp$ and $n\geq N_m$, we can apply \eqref{pre-step}
to $g:=t^nf$.
Rearranging with the help of \eqref{eq:adt}, condition
\eqref{pre-step} implies that
$$
(\phi(f) - \IdA)((\timestd)^{-n}\Powerp^d) \leq (\timestd)^{m-n}(\Powerp^d) \mbox{ for all }n\geq N_m.
$$
Hence $\pi_i\circ (\phi(f) - \IdA)|_{\spacej} = 0$ for all $j\geq -n$ and $i< m-n$.
Put $a_m := m - N_m$. Then every $i<a_m$ is less than $m-n$ with $n = m-i-1$ (which
is $\geq N_m$) and so 
\begin{equation}
\label{eq:entry_zero3}
\pi_i\circ (\phi(f)-\IdA)|_{\spacej} = 0\mbox{ for every }j\geq 1+i-m. 
\end{equation}
Equations
\eqref{simultaneously},
\eqref{eq:entry_zero2}, and \eqref{eq:entry_zero3}
can be summarized as follows.
\begin{proposition}
\label{prop:matrix_entries_zero}
\hspace*{-1.3mm}Assume\hspace*{-.1mm} that $\hspace*{-.1mm}\Powerp^d\hspace*{-.6mm}$ is
\hspace*{-.1mm}invariant \hspace*{-.1mm}under \hspace*{-.1mm}the
\hspace*{-.1mm}subgroup~$\hspace*{-.1mm}\phi(\Powerp)$ of $\Aut(\Laurp^d)$. Then there are sequences $(a_m)_{m\in \mathbb{N}}$ with $a_0=0$ (which may be assumed non-increasing), and $(b_i)_{i\in\mathbb{N}}$ (which may be assumed to be non-decreasing)
such that,
for all $f\in\Powerp$,
$$
\pi_i\circ (\phi(f) - \IdA)|_{\spacej} = 0 \,\mbox{ for all }i,\,j\in \mathbb{Z} \mbox{ with } 
\begin{cases} 
j> i-m, & \mbox{ if }\,i< a_m\\
j> i + b_i, & \mbox{ if }\,i\geq0
\end{cases}.\vspace{-5.5mm}
$$
\endproof
\end{proposition}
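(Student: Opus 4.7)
The statement is a repackaging of the three zero-pattern results \eqref{eq:entry_zero2}, \eqref{simultaneously}, and \eqref{eq:entry_zero3} into a single compact form with monotone index sequences. My plan is to read $(a_m)$ and $(b_i)$ off the integers $N_m$ and $J_i$ already produced in the text, and then post-process them to obtain the required monotonicity.

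For the sequence $(a_m)_{m \in \N}$, I would use \eqref{eq:entry_zero3}, which supplies the desired conclusion in the range $i < a_m$, $j \geq 1 + i - m$ (equivalently $j > i - m$ for integers), with $a_m := m - N_m$. For the base case $m = 0$, I take $N_0 = 0$: the $\phi(\Powerp)$-invariance of $\Powerp^d$ (established via Propositions~\ref{prop:invariant} and~\ref{prop:change_basis}) gives $\phi(g)(\zz) - \zz \in \Powerp^d = (\timestd)^0(\Powerp^d)$ for every $g \in \Powerp$ and $\zz \in \Powerp^d$, so \eqref{pre-step} holds with $N_0 = 0$, yielding $a_0 = 0$. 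Alternatively, \eqref{eq:entry_zero2} gives the $m = 0$ conclusion directly, since $j > i$ forces $j \neq i$. To make $(a_m)$ non-increasing without losing validity, I would replace each $a_m$ by $\min_{k \leq m} a_k$: this only narrows the range of $i$ for which the implication is asserted, and the condition $a_0 = 0$ is preserved.

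For the sequence $(b_i)_{i \in \N}$, I would use \eqref{simultaneously} and set $b_i := \max(0, J_i - i - 1)$. The choice $b_i \geq 0$ ensures that $j > i + b_i$ implies $j > i$, so $j \neq i$ and $\pi_i|_{\spacej} = 0$, giving $\pi_i \circ (\phi(f) - \IdA)|_{\spacej} = \pi_i \circ \phi(f)|_{\spacej}$. Moreover, $j > i + b_i$ forces $j \geq J_i$ in either case ($J_i > i$ or $J_i \leq i$), so \eqref{simultaneously} gives $\pi_i \circ \phi(f)|_{\spacej} = 0$, and hence the same for $\phi(f) - \IdA$. Finally, I would replace $b_i$ by $\max_{k \leq i} b_k$ to enforce non-decreasing monotonicity; enlarging $b_i$ only strengthens the hypothesis $j > i + b_i$, so validity is preserved.

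The proof is essentially a bookkeeping exercise and I do not anticipate a serious obstacle. The one delicate point is the distinction between $\phi(f)$ and $\phi(f) - \IdA$: the identity term contributes only on the diagonal $j = i$, so a zero statement for $\phi(f)$ transfers to $\phi(f) - \IdA$ precisely when $j \neq i$. Both monotone adjustments above move in the safe direction — restricting, rather than enlarging, the set of $(i,j)$ on which the conclusion is claimed — so no further estimates are needed, and the proposition follows by unifying the three earlier equations.
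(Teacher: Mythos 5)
Your proposal is correct and follows essentially the same route as the paper, which offers no separate proof precisely because the proposition is meant as a direct summary of \eqref{eq:entry_zero2}, \eqref{simultaneously} and \eqref{eq:entry_zero3}: $a_m=m-N_m$ with $N_0=0$ from the invariance of $\Powerp^d$, and $b_i$ read off from the $J_i$ of \eqref{simultaneously}. Your additional bookkeeping (the $\max(0,\cdot)$ to kill the $\IdA$ term off the diagonal and the running min/max to enforce monotonicity, both of which only weaken the assertion) is exactly the routine adjustment the paper leaves implicit.
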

%

\section{Proof of Theorem~\ref{thm:second_main}}
\label{sec:proof_of_B}

To show, as claimed in Theorem~\ref{thm:second_main}, that there is $\zz$ with $\phi(f)(\zz) = \zz$ for every $f\in \Laurp$, it suffices to show that $\phi(t^r)(\zz) = \zz$ for every $r\in\mathbb{Z}$ because $\langle t^r \mid r\in\mathbb{Z}\rangle$ is dense in $\Laurp$.
This will be achieved in three steps. Denote by $A = (A_{i,j})$ the $(d\times d)$-block matrix with $A_{i,j} = \pi_i\circ (\phi(t^0)-\IdA)|_{\spacej}$. Then the first step is to show that $A$ may be assumed to be lower triangular; the second that~$A$ may be assumed to be strictly lower triangular; and the third deals with the strictly lower triangular case.  

By the following approximation lemma, it will suffice to solve
the above fixed point problem for finite sets of powers of~$t$.

\subsubsection*{An approximation lemma}
\label{sec:approximation}

In the following, we let $(\Powerp^d)'$
be the dual space of all continuous $\bF_p$-linear
functionals $\lambda\colon \Powerp^d\to\bF_p$.
\begin{lemma}
\label{lem:approximation}
Let $(\mathcal{E}_n)_{n\in \mathbb{N}}$ be an increasing sequence of subsets of
$(\Powerp^{d})'$
and suppose that, for each $n\in \mathbb{N}$, there is $\yy^{(n)}\in \Powerp^{d}$
with $\yy^{(n)}_0\ne0$ and 
\begin{equation*}
\label{eq:approximation1}
\lambda(\yy^{(n)})=0
\mbox{ for all }\lambda\in \mathcal{E}_n.
\end{equation*}
Then there is $\yy\in\Powerp^{d}$ such that $\yy_0\ne 0$ and
$\lambda(\yy)=0$ for all $\lambda \in \bigcup_{n\in\N}\cE_n$.
\end{lemma}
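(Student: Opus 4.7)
\medskip

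\noindent\textbf{Proof plan.} The argument is a compactness/pigeonhole argument inside the compact group $\Powerp^d$. For each $n\in\N$, set
\[
S_n := \{\yy\in\Powerp^d\colon \lambda(\yy)=0\ \text{for all }\lambda\in\cE_n\}.
\]
Since each $\lambda\in(\Powerp^d)'$ is a continuous homomorphism into the discrete group $\bF_p$, its kernel is closed in $\Powerp^d$, so $S_n$ is a closed, hence compact, subgroup of $\Powerp^d$. Because the sequence $(\cE_n)$ is increasing, the sequence $(S_n)$ is decreasing: $S_0\supseteq S_1\supseteq\cdots$. The hypothesis says that $S_n$ contains some vector $\yy^{(n)}$ whose constant term $\yy^{(n)}_0\in\bF_p^d$ is nonzero.

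The obstacle is that the condition ``$\yy_0\ne 0$'' is open, not closed, so one cannot immediately take an intersection. The fix is to pin down a single constant term that works for cofinally many $n$. For each $v\in\bF_p^d\setminus\{0\}$, put
\[
T_v := \{\yy\in\Powerp^d\colon \yy_0=v\};
\]
this is a clopen coset of the (compact, open) subgroup $T_0=t\Powerp^d$, so $S_n\cap T_v$ is compact for every $n$ and $v$. Assigning to each $n$ the value $v_n:=\yy^{(n)}_0\in \bF_p^d\setminus\{0\}$, and using that $\bF_p^d\setminus\{0\}$ is \emph{finite}, the pigeonhole principle produces some fixed $v\in \bF_p^d\setminus\{0\}$ with $v_n=v$ for infinitely many~$n$. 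Together with the nesting of the $S_n$, this yields $S_n\cap T_v\ne\emptyset$ for \emph{every} $n\in\N$.

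Finally, $(S_n\cap T_v)_{n\in\N}$ is a decreasing chain of nonempty closed subsets of the compact space $\Powerp^d$, so by the finite intersection property
\[
\bigcap_{n\in\N}(S_n\cap T_v)\ne\emptyset.
\]
Any $\yy$ in this intersection satisfies $\yy_0=v\ne 0$, and for every $\lambda\in\bigcup_{n\in\N}\cE_n$ there is some $m$ with $\lambda\in\cE_m$, whence $\lambda(\yy)=0$ because $\yy\in S_m$. This $\yy$ is the desired element, completing the proof. The only step requiring any care is the pigeonhole move, which is unavoidable precisely because the positivity condition on $\yy_0$ is not preserved under arbitrary limits in $\Powerp^d$.
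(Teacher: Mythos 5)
Your proof is correct. It takes a genuinely different route from the paper's. The paper exploits metrizability of the compact group $\Powerp^d$ to extract a convergent subsequence $(\yy^{(n_k)})_k$ and lets $\yy$ be its limit; then $\lambda(\yy)=\lim_k\lambda(\yy^{(n_k)})=0$ for each $\lambda$ in the union by continuity of~$\lambda$, and $\yy_0\ne 0$ because $\yy^{(n_k)}_0\to\yy_0$ and $\bF_p^d\setminus\{0\}$ is a closed subset of the discrete finite group $\bF_p^d$. You instead form the decreasing chain of closed sets $S_n\cap T_v$ and invoke the finite-intersection property, using pigeonhole on the finite set $\bF_p^d\setminus\{0\}$ to pin down one coset $T_v$ that meets every $S_n$. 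Both arguments hinge on the same two ingredients --- compactness of $\Powerp^d$ and finiteness of $\bF_p^d$ --- but your version dispenses with metrizability (it would work verbatim for any compact group with a continuous finite-valued ``constant-term'' map), at the cost of the extra pigeonhole step, whereas the paper's sequential-compactness argument is marginally shorter given that metrizability is available here.
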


\begin{proof}
As $\Powerp^d$ is compact and metrizable,
the sequence $(\yy^{(n)})_{n\in\N}$
has a convergent subsequence $(\yy^{(n_k)})_{k\in\N}$.
Let $\yy\in\Powerp^d$ be its limit.
For $n\in\N$ and $\lambda\in\cE_n$,
we have $\lambda(\yy^{(n_k)})=0$
for all $k\in\N$ such that $n_k\geq n$, and thus
\[
\lambda(\yy)=\lim_{k\to\infty}\lambda(\yy^{(n_k)})=0.
\]
Hence $\lambda(\yy)=0$ for all $\lambda\in\bigcup_{n\in\N}\cE_n$.
Since $\yy^{(n_k)}_0\to \yy_0$ in $\bF_p^d$
and $\bF_p^d\setminus\{0\}$ is closed, we have $\yy_0\not=0$.
\end{proof}

\subsubsection*{Above the diagonal}
\label{sec:above_diagonal}

The first step, showing that $A$ may be assumed to be lower triangular, is accomplished by passing to a
subrepresentation
$f\mto \phi(f)|_{\mathcal{U}}$
for a $\phi(\Laurp)$-invariant and $\timestd$-invariant
closed subgroup $\mathcal{U}$ of $\Laurp^d$.
Towards finding this subgroup, define 
\begin{equation}
\label{eq:define_U0}
\mathcal{U}_0 = \left\{ \yy\in \Laurp^d \mid \phi(f)\yy\in\Powerp^d \mbox{ for all }f\in \Laurp\right\}.
\end{equation}
Then $\mathcal{U}_0$ is a closed subgroup of~$\Laurp^d$ and $\mathcal{U}_0 \leq \Powerp^d$ because $\phi(0) = \IdA$. It will be seen later that $\mathcal{U}_0$ is non-zero.

Its definition implies that~$\mathcal{U}_0$ is invariant under $\phi(\Laurp)$, because $\phi(\Laurp)$ is a subgroup of $\Aut(\Laurp^d)$. It is also invariant under~$\timestd$ because, for $\yy\in\mathcal{U}_0$ and~$f\in \Laurp$, we have
\begin{align}
\label{eq:U0_invariant_under_t}
\phi(f)\timestd\yy &= \timestd\left(\ad(\timestd)^{-1}(\phi(f))\right)\yy \\
&= \timestd \phi(\timest^{-1} f) \yy \leq \timestd\Powerp^d.\notag
\end{align}
Define $\mathcal{U}_n := \timestd^n\mathcal{U}_0$
for $n\in\mathbb{Z}$. Then $\mathcal{U}_m\leq \mathcal{U}_n$ if $m\geq n$ and the calculation in~\eqref{eq:U0_invariant_under_t} shows that
$$
\mathcal{U}_n = \left\{ \yy\in \Laurp^d \mid \phi(f)\yy\leq \timestd^n\Powerp^d \mbox{ for all }f\in \Laurp\right\}.
$$
Define $\mathcal{U} = \bigcup_{n\in\mathbb{Z}} \mathcal{U}_n$.
By \cite[Proposition~3.1(a)]{GW},
$\mathcal{U}$ is a $\timestd$-invariant closed subgroup of~$\Laurp^d$
which has $\mathcal{U}_0$ as an open subgroup;
notably, $(\mathcal{U},\timestd|_{\mathcal{U}})$
is a contraction group.
Also, $\mathcal{U}$ is $\phi(\Laurp)$-invariant as each $\mathcal{U}_n$
is so, and
\[
\mathcal{U}\cong\Laurp^{d'}
\]
as a contraction group for some $d'\leq d$
(as a composition series of $(\mathcal{U},\timestd|_{\mathcal{U}})$
can be filled up to a composition series of $\Laurp^d$).

Proposition~\ref{prop:change_basis} implies that there are: an isomorphism $\theta : \mathcal{U} \to \Laurp^{d'}$, such that $\theta(\mathcal{U}_0) = \Powerp^{d'}$ and
$\theta\circ \timestd|_{\mathcal{U}} = \timestc\circ\, \theta$
and a homomorphism $\psi : \Laurp\to \Aut(\Powerp^{d'})$, such that $\theta \circ \phi(f)|_{\mathcal{U}} = \psi(f)\circ \theta$ for every $f \in \Laurp$. Then we have, for every $n\in\mathbb{Z}$, 
$$
\psi(f)\timestc^n\Powerp^{d'}\leq \timestc^n\Powerp^{d'}
$$ 
and the $(d'\times d')$-block matrix $\left(\pi_i\circ \psi(f)|_{\spacej}\right)_{(i,j)\in \mathbb{Z}^2}$ is thus lower triangular for every $f\in\Laurp$. Passing to the subgroup $\mathcal{U}$, it suffices, therefore, to prove Theorem~\ref{thm:second_main} in the case when $\phi$ maps $\Laurp$ to lower triangular matrices. 

It remains to show that the subgroup $\mathcal{U}_0$ defined in~\eqref{eq:define_U0} is not trivial.
To do so, it suffices to find non-zero $\yy\in \Powerp^d$ such that $\phi(t^{-n}f)\yy \in \Powerp^d$ for all $f\in \Powerp^d$ and $n\in\mathbb{Z}$, which is equivalent to
\begin{equation}
\label{eq:n_condition}
A(f)\timestd^n\yy\in \timestd^n\Powerp^d
\end{equation}
for all $n\in\mathbb{Z}$,
where $A(f) = (A(f)_{i,j})_{i,j\in\mathbb{Z}}$ with $A(f)_{i,j}
:= (\phi(f)-\IdA)_{i,j}=\pi_i\circ(\phi(f)-\IdA)|_{\spacej}$. Denote the matrix $\ad(\timestd^r)A(f)$
by $A(f)^{(r)}$ and its entries by $A(f)^{(r)}_{i,j}$.
Then $A(f)^{(r)}_{i,j} = A(f)_{i-r,j-r} = \pi_i\circ(\phi(t^rf)-\IdA)|_{\spacej}$ and,
since $\phi(f)$ and $\phi(t^rg)$ commute, $A(f)$ and~$A(g)^{(r)}$
commute for all $f,g\in \Laurp$.
If $A(f)$ is already lower triangular for all $f\in\Powerp$
(and thus for all $f\in \Laurp$),
there is nothing to prove (then
$\Powerp^d\sub \mathcal{U}$).
We may suppose therefore that there are integers $i^*$ and~$j^*$ with $j^*>i^*$ such that $A(f)_{i^*,j^*} \ne 0$
for some $f\in \Powerp$,
and that $i^*$ is the smallest integer for which that is the case.
Having fixed $i^*$, we also choose $j^*$ maximal.
Then $i^*\geq0$ by Proposition~\ref{prop:matrix_entries_zero}. 

Fix a positive integer $c$. We begin by finding a non-zero $\yy\in\Laurp^d$ such that \eqref{eq:n_condition} holds for all $n\leq c$. To this end, recall from Proposition~\ref{prop:matrix_entries_zero} that there is an integer $a$ such that
\begin{equation}
\label{eq:c_restriction}
A(f)_{i,j} = 0 \mbox{ if }i<a \mbox{ and }j\geq i-c,
\end{equation}
for all $f\in \Powerp$.
Then $a\leq i^*$ and we choose $a$ to be the largest integer for
which~\eqref{eq:c_restriction} holds. In that case, there is~$b$ with 
\begin{equation}
\label{eq:choose_b}
b \geq a-c \mbox{ and } A(f^*)_{a,b}\ne0
\end{equation}
for some $f^*\in \Powerp$,
and we choose $b$ to be the largest integer for which~\eqref{eq:choose_b} holds.
Having maximized~$b$, we fix the element $f^*\in \Powerp$ with
$A(f^*)_{a,b}\ne 0$ (it may be that $a=i^*$ and $b=j^*$). Then we have that 
 \begin{equation}
 \label{eq:Azerocases}
 A(f)_{i,j} = 0 \mbox{ if }
\begin{cases}
i<a\mbox{ and }j\geq i-c\\
i=a\mbox{ and }j>b\\
i<i^*\mbox{ and }j> i\\
i=i^*\mbox{ and }j>j^*
\end{cases}
\end{equation}
for all $f\in\Powerp$.

Suppose that $m$ and $n$ are integers with $n>m\geq i^*$. Then 
\begin{align}
\label{eq:first_sum}
\left(A(f)A(f^*)^{(n-a)}\right)_{m,b+n-a} &= \sum_{r\in\mathbb{Z}} A(f)_{m,r}
A(f^*)_{r+a-n,b}\notag\\
&= \sum_{r\geq n} A(f)_{m,r} A(f^*)_{r+a-n,b}, 
\end{align}
because, if $r<n$, then $r+a-n<a$ and $b> (r+a-n)-c$ by the choice of~$b$ in~\eqref{eq:choose_b}, and~\eqref{eq:Azerocases} may be applied to conclude that
$A(f^*)_{r+a-n,b}=0$. On the other hand,
\begin{align}
\label{eq:second_sum} 
\left(A(f^*)^{(n-a)}A(f)\right)_{m,b+n-a} &= \sum_{r\in\mathbb{Z}} A(f^*)_{m+a-n,r+a-n}
A(f)_{r,b+n-a}\notag\\
&= \sum_{r<m-c} A(f^*)_{m+a-n,r+a-n}A(f)_{r,b+n-a}
\end{align}
because $m+a-n < a$ and, if $r\geq m-c$, then~$r+a-n \geq (m+a-n)-c$ and it follows by~\eqref{eq:Azerocases} that $A(f^*)_{m+a-n,r+a-n} = 0$. 

If we now suppose that $m\leq i^*+c$, then the remaining terms in~\eqref{eq:second_sum} vanish as well, because $r<m-c$ implies that~$r<i^*$ while~\eqref{eq:choose_b} implies that $b+n-a \geq n-c$ and we have that~$n-c>m-c>r$. Hence it follows from~\eqref{eq:Azerocases} that~$A(f)_{r,b+n-a}=0$. 

Since $A(f)$ and~$A(f^*)^{(n-a)}$ commute,~\eqref{eq:first_sum},~\eqref{eq:second_sum} and the fact that~\eqref{eq:second_sum} equals zero imply that
$$
\sum_{r\geq n} A(f)_{m,r} A(f^*)_{r+a-n,b} = 0 \mbox{ for all }m\leq i^*+c\mbox{ and }n>m.
$$
The $r=n$ term of this series is $A(f)_{m,n}A(f^*)_{a,b}$.
Since~$A(f^*)_{a,b}\ne0$, by~\eqref{eq:choose_b}, we may choose
$z\in \mathbb{F}_p^d$ such that $A(f^*)_{a,b}
z\ne0$. Define $\bar{z} = zt^b\in\Laurp^d$ and $\bar{\yy} =
(\timestd)^{-a}A(f^*)\bar{z}$. Then: 
\begin{itemize}
\item $\bar{\yy}\in \Powerp^d$ with $\bar{\yy}_0$ non-zero; and 
\item $(A(f)\timestd^{n}\bar{\yy})_m = 0$ for every $f\in\Powerp$,
$m\leq i^*+c$ and $n>m$. 
\end{itemize}
In particular,
\begin{equation}\label{herefunctionals}
(A(f)\timestd^{n}\bar{\yy})_m = 0 \mbox{\;for all $f\in \Powerp$,
$n\leq c$ and $m<n$,}
\end{equation}
which implies that \eqref{eq:n_condition} holds for all $n\leq c$.
Since \eqref{herefunctionals}
holds for every $c>0$ with some $\bar{\yy}\ne 0$,
Lemma~\ref{lem:approximation} implies\footnote{The set $\cE_c$
consists of the components of the maps $\Powerp^d \to \bF_p^d$,
$\zeta\mto (A(f)\timestd^n\zeta)_m$ with $n\leq c$, $m<n$,
and $f\in\Powerp^d$.}
that there is $\yy\in \Powerp^d$ with $\yy_0\ne0$ such that, for all $c>0$,
\[
(A(f)\timestd^{n}\yy)_m = 0 \mbox{\;for all $f\in \Powerp$,
$n\leq c$ and $m<n$,}
\]
so that
\eqref{eq:n_condition} holds for every $n$. Thus
$\yy\in \mathcal{U}_0$ and $\mathcal{U}_0$ is therefore non-zero.

\subsubsection*{On the diagonal}
\label{sec:on_diagonal}

It is seen below that the rest of the argument falls into two subcases. The following result will be used to treat one of these subcases. 
In the following, we let
$\mathcal{A}$ be the subalgebra of $\End(\Laurp^d)$
generated by $\{\phi(t^r)-\IdA\}_{r\in \mathbb{Z}}$.
For positive integers~$N$, we abbreviate
\[
\cA^N:=\{a_1\cdots a_N\colon a_1,\ldots,a_N\in\cA\}.
\]
\begin{lemma}
\label{lem:on_diagonal}
Suppose that the $(d\times d)$ block matrix $\phi(f)$
is lower triangular for every $f\in \Laurp$.
Then $B_{i,i} =0$ for every $B\in \mathcal{A}^d$. 
\end{lemma}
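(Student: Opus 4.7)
The plan is to exploit the block-lower-triangular structure together with two algebraic facts: the abelian image $\phi(\Laurp)$ forces the diagonal-block algebras to be commutative, and characteristic~$p$ converts the exponent-$p$ relation $\phi(f)^p=\IdA$ into nilpotency of $\phi(f)-\IdA$.

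First I would reduce to a statement about ordinary $d\times d$ matrices over~$\bF_p$. Since every $\phi(f)$ is block-lower-triangular in the $\spacej$-decomposition, so is every element of~$\cA$, and the $(i,i)$-block of a product of block-lower-triangular matrices equals the product of the $(i,i)$-blocks. Hence, for $B=a_1\cdots a_d$ with $a_k\in\cA$, we have $B_{i,i}=(a_1)_{i,i}\cdots(a_d)_{i,i}$ in $M_d(\bF_p)$. Thus $a\mapsto a_{i,i}$ is a (non-unital) ring homomorphism $\cA\to M_d(\bF_p)$, whose image is the subalgebra $\cB_i\sub M_d(\bF_p)$ generated by $\{(\phi(t^r)-\IdA)_{i,i}\}_{r\in\Z}$. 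It suffices to show that $\cB_i^d=\{0\}$ for every $i\in\Z$.

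Next I would establish that $\cB_i$ is a commutative algebra of nilpotent matrices. Commutativity holds because $\phi(\Laurp)$ is abelian (being the image of the abelian group $(\Laurp,+)$ under a group homomorphism) and passing to diagonal blocks preserves commutation. For nilpotency, since $\Laurp$ has characteristic~$p$ we have $pf=0$ and hence $\phi(f)^p=\phi(pf)=\IdA$; taking $(i,i)$-blocks gives $(\phi(t^r))_{i,i}^p=\id_d$, and the Frobenius identity in characteristic~$p$ yields
\[
\bigl((\phi(t^r)-\IdA)_{i,i}\bigr)^p=(\phi(t^r))_{i,i}^p-\id_d=0.
\]
Each generator of $\cB_i$ is therefore nilpotent, and since $\cB_i$ is commutative, every element of $\cB_i$ (being a polynomial in commuting nilpotents) is nilpotent.

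Finally I would invoke the classical fact that an associative subalgebra of $M_d(k)$ consisting entirely of nilpotent matrices can be simultaneously conjugated, after extending scalars to the algebraic closure of~$k$, into the strictly upper triangular matrices (Engel/Kolchin). The $(i,j)$-entry of a product of $d$ strictly upper triangular $d\times d$ matrices is a sum over strictly increasing chains $i<k_1<\cdots<k_{d-1}<j$ in $\{1,\ldots,d\}$, of which there are none; hence $\cB_i^d=\{0\}$ and $B_{i,i}=0$, as required. The only non-routine input is this triangularization theorem; alternatively one could induct on~$d$ using the fact that a commuting family of nilpotent matrices has a common kernel vector and hence a common invariant complete flag.
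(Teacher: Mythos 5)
Your proof is correct and follows essentially the same route as the paper: pass to the $(i,i)$-blocks (a multiplicative map on block-lower-triangular matrices), note that these blocks form a commuting family of unipotent matrices so that the generators $(\phi(t^r)-\IdA)_{i,i}$ are nilpotent, and invoke Engel/Kolchin simultaneous strict triangularization to conclude that any product of $d$ such blocks vanishes. The only cosmetic difference is that you get nilpotency of the generators from the Frobenius identity $(X-\id_d)^p=X^p-\id_d=0$, whereas the paper argues via eigenvalues (from $\lambda^{p^k-1}=1$ and $\lambda^p=1$ one gets $\lambda=1$); both are immediate.
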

\begin{proof}
Consider an eigenvalue, $\lambda$, of $\phi(t^r)_{i,i}$ in some extension of $\mathbb{F}_p$. Then $\lambda^{p^k-1} = 1$, where $p^k$ is the order of the extension. On the other hand, the order of $\phi(t^r)_{i,i}$ is~$1$ or~$p$ because $pt^r = 0$ and it follows that $\lambda^p=1$. Therefore $\lambda=1$ is an eigenvalue of $\phi(t^r)_{i,i}$ for every $i\in\mathbb{Z}$ and is the only eigenvalue in any extension of $\mathbb{F}_p$. Hence $\phi(t^r)_{i,i} - \IdA_d$ is nilpotent for all $i,r\in\mathbb{Z}$. 

Since $\phi(t^0)$ and $\phi(t^r)$ commute and $\phi(t^r)_{i,i} = \phi(t^0)_{i-r,i-r}$ for all $i,r\in\mathbb{Z}$, it follows that $\{\phi(t^0)_{i,i}\}_{i\in\mathbb{Z}}$ is a commuting set and, moreover, that this set of equal to $\left\{ \phi(t^r)_{i,i}\mid i,r\in\mathbb{Z}\right\}$. Then $\left\{ \phi(t^r)_{i,i} - \IdA_d\mid i,r\in\mathbb{Z}\right\}$ is a commuting set of nilpotent $d\times d$ matrices over $\mathbb{F}_p$,
and thus generates a commutative $\bF_p$-algebra of matrices.
By Engel's Theorem (see Theorem~3.2
in \cite[Part~I, Chapter~V]{Ser}),
after a change of basis all algebra elements can be turned
into strict upper triangular matrices,
entailing that the product of any~$d$ elements in this algebra
is equal to~$0$. As~$B_{i,i}$ is such a product, the claim follows.
 \end{proof}  
 
  \subsubsection*{Below the diagonal}
\label{sec:below_diagonal}

For the remainder of the argument it is assumed that $\phi(t^0)$ is a lower triangular $(d\times d)$ block matrix. As before,
let~$\mathcal{A}$ be the subalgebra of $\End(\Laurp^d)$ generated by $\{\phi(t^r)-\IdA\}_{r\in \mathbb{Z}}$. The argument falls into two cases: when~$\mathcal{A}$ is nilpotent and when it is not. Since the elements $t^n$, $n\in\mathbb{Z}$, generate a dense subgroup of $\Laurp$, to prove Theorem~\ref{thm:second_main} it suffices to show that there is $\zz\in \Laurp^d$ such that $\phi(t^r)(\zz) = \zz$ for every $r\in \mathbb{Z}$.

\begin{proposition}
\label{prop:A_nilpotent}
Suppose there is $N\in \mathbb{N}$ such that $\mathcal{A}^N = \{0\}{}$. Then there is $\zz\in \Laurp^d\setminus\{0\}$ such that $\phi(t^n)(\zz) = \zz$ for every $n\in \mathbb{Z}$.
\end{proposition}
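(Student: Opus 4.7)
The plan is to extract a fixed vector directly from the nilpotency hypothesis, with no further structural input. Let $k_0$ be the smallest positive integer with $\mathcal{A}^{k_0}=\{0\}$; it exists because $\mathcal{A}^N=\{0\}$. The degenerate case $k_0=1$ means $\phi(t^r)-\IdA=0$ for every $r\in\Z$, so $\phi(t^r)=\IdA$ and any non-zero vector in $\Laurp^d$ is fixed by all $\phi(t^r)$.

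Assume therefore $k_0\geq 2$. By minimality of $k_0$, the set $\mathcal{A}^{k_0-1}$ contains some non-zero endomorphism $B$, and hence there exists $z\in\Laurp^d$ with $Bz\neq 0$. Set $\zz:=Bz$. For any $A\in\mathcal{A}$ the product $AB$ lies in $\mathcal{A}^{k_0}=\{0\}$, so $A\zz=A(Bz)=(AB)z=0$. Applied to the particular elements $A=\phi(t^r)-\IdA\in\mathcal{A}$, this gives
\[
(\phi(t^r)-\IdA)(\zz)=0,\qquad\text{i.e.,}\qquad \phi(t^r)(\zz)=\zz,
\]
for every $r\in\Z$, which is exactly the conclusion of the proposition.

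There is essentially no obstacle: the argument is a one-line consequence of the definition of nilpotent algebra, once one remembers that $\mathcal{A}$ is generated as an algebra by the operators $\phi(t^r)-\IdA$ themselves, so that annihilation of $\zz$ by all of $\mathcal{A}$ already encodes the desired fixed-point property. The only care needed is to pick $B$ in the last non-vanishing power $\mathcal{A}^{k_0-1}$ rather than in $\mathcal{A}$ itself, so that left-multiplication by any element of $\mathcal{A}$ lands in $\mathcal{A}^{k_0}=\{0\}$.
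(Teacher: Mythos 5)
Your argument is correct and is essentially identical to the paper's own proof: take the minimal $N$ with $\mathcal{A}^N=\{0\}$, handle $N=1$ trivially, and otherwise pick a non-zero $B\in\mathcal{A}^{N-1}$ and a vector $\yy$ with $\zz:=B\yy\neq 0$, so that $\mathcal{A}\zz=\{0\}$ and in particular $(\phi(t^n)-\IdA)(\zz)=0$ for all $n\in\Z$. Nothing further is needed.
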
  
\begin{proof}
If $N=1$, then $\mathcal{A} = \{0\}$ and $\phi(t^n) = \{\IdA\}$ for every $n\in \mathbb{Z}$ and so $\phi(t^n)(\zz) = \zz$ for every $\zz\in \Laurp^d$. 

Suppose that $N$ is the smallest integer with $\mathcal{A}^N = \{0\}$ and that
$N>1$.
Then $\mathcal{A}^{N-1} \ne \{0\}$ and we may choose $B\in \mathcal{A}^{N-1}\setminus\{0\}$. There is then $\yy\in \Laurp^d$ such that $\zz := B\yy \ne 0$ and $\mathcal{A}\zz =\{0\}$. This~$\zz$ satisfies the claim because $\phi(t^n) - \IdA\in \mathcal{A}$ for every $n\in\mathbb{Z}$.
\end{proof}

\begin{proposition} 
\label{prop:A_not_nilpotent}
Suppose that $\mathcal{A}^N \ne \{0\}{}$ for every $N\in \mathbb{N}$. Then there is $\zz\in \Laurp^d\setminus\{0\}$ such that $\phi(t^n)(\zz) = \zz$ for every $n\in \mathbb{Z}$. 
\end{proposition}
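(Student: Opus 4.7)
The plan is to construct the desired invariant vector $\zz$ as a limit via the approximation Lemma~\ref{lem:approximation}. For each $c\geq 1$, the goal is to produce $\yy^{(c)}\in\Powerp^d$ with $\yy^{(c)}_0\ne 0$ that is fixed by $\phi(t^r)$ for every $|r|\leq c$. Letting $\mathcal{E}_c\subseteq(\Powerp^d)'$ denote the set of components of the continuous $\bF_p$-linear maps $\zeta\mto \pi_n\circ(\phi(t^r)-\IdA)(\zeta)$ for $|r|\leq c$ and $n\in\Z$, the $\mathcal{E}_c$ form an increasing family and each $\yy^{(c)}$ is annihilated by every $\lambda\in\mathcal{E}_c$. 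Lemma~\ref{lem:approximation} then yields $\yy\in\Powerp^d$ with $\yy_0\ne 0$ and $\phi(t^r)\yy=\yy$ for every $r\in\Z$; continuity of $\phi$ and density of $\langle t^r:r\in\Z\rangle$ in $\Laurp$ force $\phi(f)\yy=\yy$ for every $f\in\Laurp$, so $\zz:=\yy$ is the required fixed vector.

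To produce $\yy^{(c)}$, consider the $\bF_p$-subalgebra $\mathcal{A}_c\subseteq\End(\Laurp^d)$ generated by $\{A(t^r):=\phi(t^r)-\IdA:|r|\leq c\}$. Its generators pairwise commute (since $\Laurp$ is abelian and $\phi$ is a homomorphism), and each is $p$-nilpotent: in characteristic $p$, $A(t^r)^p=\phi(t^r)^p-\IdA=\phi(pt^r)-\IdA=\phi(0)-\IdA=0$. Hence $\mathcal{A}_c$ is a finitely generated commutative $\bF_p$-algebra of nilpotent operators, so it is itself finite-dimensional and nilpotent; let $N_c$ denote its nilpotency index. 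Pick $B\in \mathcal{A}_c^{N_c-1}\setminus\{0\}$ and $y\in\Laurp^d$ with $w:=By\ne 0$; then $\mathcal{A}_c w\subseteq \mathcal{A}_c^{N_c}y=\{0\}$, so $w$ is fixed by $\phi(t^r)$ for every $|r|\leq c$.

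The main obstacle is to modify $w$ (an arbitrary non-zero element of $\Laurp^d$) into an element $\yy^{(c)}\in\Powerp^d$ with $\yy^{(c)}_0\ne 0$, without losing the fixed-point property. Here the hypothesis $\mathcal{A}^N\ne\{0\}$ for all $N$ is essential. The $\ad(\timestd)$-action on $\mathcal{A}$ sends $A(t^r)\mto A(t^{r+1})$ and maps $\mathcal{A}_c$ into $\mathcal{A}_{c+|k|}$ after conjugation by $\timestd^{-k}$. I would work in an enlarged algebra $\mathcal{A}_{c'}$ (for some $c'\geq c$ whose non-trivial $(N_{c'}-1)$-fold products are supplied by the hypothesis), choose $B\in\mathcal{A}_{c'}^{N_{c'}-1}\setminus\{0\}$, take $k$ to be the smallest row index at which the block-matrix representation of $B$ has a non-zero entry, and form $B':=\ad(\timestd^{-k})(B)\in\mathcal{A}_{c'+|k|}^{N_{c'}-1}$; then $B'$ has a non-zero entry in row~$0$. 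A suitable $y'\in\Laurp^d$ supported in a single negative-degree block then makes $w':=B'y'\in\Powerp^d$ with $w'_0\ne 0$. Using the identity
$$
A(t^r)\,B'y' \;=\; \timestd^{-k}\,A(t^{r+k})\,B\,(\timestd^k y'),
$$
together with $A(t^{r+k})B\in\mathcal{A}_{c'}^{N_{c'}}=\{0\}$ whenever $|r+k|\leq c'$, we deduce that $w'$ is fixed by $\phi(t^r)$ for all $|r|\leq c$ provided $c'\geq c+|k|$. The delicate step is the simultaneous calibration of $c'$ and $k$: one must arrange $c'\geq c+|k|$ while ensuring that some $B\in\mathcal{A}_{c'}^{N_{c'}-1}$ has row shift $|k|\leq c'-c$. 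The hypothesis $\mathcal{A}^N\ne\{0\}$ for all $N$ guarantees that $\mathcal{A}_{c'}^{N_{c'}-1}$ is non-trivial for every $c'$, supplying the freedom needed to carry out this calibration.
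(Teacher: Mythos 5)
Your overall skeleton is sound and matches the paper's at the top level: produce, for each $c$, a vector $\yy^{(c)}\in\Powerp^d$ with $\yy^{(c)}_0\ne 0$ killed by the relevant functionals, and pass to a limit via Lemma~\ref{lem:approximation}. Your opening observations are also correct and even pleasant: the generators $A(t^r)=\phi(t^r)-\IdA$ commute, satisfy $A(t^r)^p=\phi(pt^r)-\IdA=0$ in characteristic $p$, and hence each finitely generated subalgebra $\cA_c$ is nilpotent, so a nonzero vector fixed by $\phi(t^r)$ for all $|r|\leq c$ exists (this is an alternative to the way the paper only gets diagonal-block nilpotency from Lemma~\ref{lem:on_diagonal}). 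The problem is the step you yourself flag as ``delicate'': it is not a technicality to be calibrated, it is the entire content of the proposition, and your proposal gives no argument for it. You need a nonzero $B$ annihilated by $A(t^s)$ for all $s$ in a window containing $[-c-k,\,c-k]$, where $k$ is the row shift needed to bring a nonzero block of $B$ into row $0$ (with nothing below it in its column); but $k$ depends on $B$, which depends on the window. The hypothesis $\cA^N\ne\{0\}$ guarantees that $\cA_{c'}^{N_{c'}-1}\ne\{0\}$ (in fact that is automatic from the definition of $N_{c'}$ and needs no hypothesis), but it gives no information whatsoever about \emph{where} the nonzero blocks of such elements sit: as $c'$ grows, they could a priori occur only in rows whose index grows much faster than $c'$, so that $|k|\leq c'-c$ can never be arranged. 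Note also that $\cA_{c'}$ is not $\ad(\timestd)$-invariant, so shifting $B$ merely translates the window and reproduces the same constraint; and a globally ``smallest row index with a nonzero entry'' need not exist (only column-wise, by {\bf(M2)}), though that is a minor point.

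The tell-tale sign of the gap is that your argument never uses Proposition~\ref{prop:matrix_entries_zero}, i.e.\ the vanishing pattern of $A$ in rows far below $0$ that comes from continuity of $\phi$ at $0$ together with~\eqref{eq:phi_identity}; this is the essential analytic input, and the paper's proof is built precisely to exploit it where your calibration fails. The paper does not require $A(t^s)B=0$ at all. Instead, for given $r$ it takes $a_r$ from Proposition~\ref{prop:matrix_entries_zero} (so $A_{i,j}=0$ for $i<-a_r$, $j>i-r$), uses Lemma~\ref{lem:on_diagonal} to pick $B\in\cA^N$, $N=(r+a_r)d$, whose highest nonzero subdiagonal offset $q\geq r+a_r$ is \emph{controlled and intrinsic} (invariant under row shifts, which breaks your circularity), and then shows the needed entries $(AB^{(j)})_{i,j-q}$ vanish for all $-r<i<r$ by computing the product in the other order, $(B^{(j)}A)_{i,j-q}$, and invoking the vanishing pattern of $A$ below row $-a_r$ together with commutativity of $\cA$. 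Unless you supply an argument bounding the row location of nonzero blocks of top-power elements of $\cA_{c'}$ in terms of $c'$ --- which would in effect reprove the paper's key estimates --- the final step of your proposal is an assertion, not a proof, and the argument is incomplete.
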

\begin{proof}
When $N\geq d$, Lemma~\ref{lem:on_diagonal} implies that $(\mathcal{A}^N)_{i,i} = \{0\}$ for every $i\in \mathbb{Z}$, that is, $\mathcal{A}^N$ is strictly lower triangular for $N\geq d$. For all $q\geq 1$,
it follows that the matrices in $\mathcal{A}^N$
are zero on the first~$q-1$ sub-diagonals for all $N\geq qd$. 
Also note that, for $N\geq 1$,
not all matrices in $\cA^N$
are block diagonal, because $\cA^{dN}=\{0\}$
otherwise by Lemma~\ref{lem:on_diagonal},
contrary to the current hypothesis.

Recall that $A = (A_{i,j})_{i,j\in\mathbb{Z}}$ with $A_{i,j} = \pi_i\circ(\phi(t^0)-\IdA)|_{\spacej}$ and that $A^{(n)} = \ad(\timestd^n)(A)$ satisfies that $A^{(n)}_{i,j} = A_{i-n,j-n} = \pi_i\circ(\phi(t^n)-\IdA)|_{\spacej}$. It will be shown that there is $\zz\in \Powerp^d$ with $\zz_0\ne0$ such that $(A\circ\timestd^{-n})(\zz)_m = 0$ for all $m,n\in\mathbb{Z}$. Then $A^{(n)}(\zz)$ is equal to zero for every~$n\in\mathbb{Z}$ and~$\zz$ is the claimed element of~$\Laurp^d$.

For any lower triangular block matrix,~$B$, and for~$j\leq i\in \mathbb{Z}$ and $q>0$, we have
\begin{align}
\label{eq:first_below}
(AB^{(j)})_{i,j-q} &= \sum_{k\in\mathbb{Z}} A_{i,k}B^{(j)}_{k,j-q} \notag\\
&= \sum_{j-q\leq k\leq i} A_{i,k}B_{k-j,-q}.
\end{align}
Multiplying~$A$ and~$B^{(j)}$ in the opposite order yields 
\begin{align}
\label{eq:second_below}
(B^{(j)}A)_{i,j-q} &= \sum_{k\in\mathbb{Z}} B^{(j)}_{i,k}A_{k,j-q} \notag\\
&= \sum_{j-q\leq k\leq i} B_{i-j,k-j}A_{k,j-q}.
\end{align}
Next, let $r>0$ be fixed and consider $-r< i < r$ and $i-r< j\leq i$. By Proposition~\ref{prop:matrix_entries_zero}, there is $a_r>0$ such that $A_{i,j}=0$
if $i < -a_r$ and $j>i-r$. Then, by hypothesis, we may choose $q\geq r+a_r$
and $B\in \mathcal{A}^N$ with
$N := (r+a_r)d$ such that $B_{0,-q} \ne 0$ and $B_{i,j} =0$ for all
$i,j\in \mathbb{Z}$ with $j-i>-q$. With these choices of~$q$ and~$B$, we have $B_{k-j,-q} = 0$ for all $k<j$ and~\eqref{eq:first_below} reduces to
$$
(AB^{(j)})_{i,j-q} = \sum_{j\leq k\leq i} A_{i,k}B_{k-j,-q}.
$$
We also have that $B_{i-j,k-j} = 0$ if $k> i-q$ and
$A_{k,j-q} = 0$ if $k\leq i-q$,
using that $k\leq i-q<r-q\leq -a_r$ in this case
and $j-q\geq j + k -i>k-r$. Hence
\eqref{eq:second_below} reduces to
$$
(B^{(j)}A)_{i,j-q} = 0.
$$
Since~$A$ and $B^{(j)}$ commute, it follows that, for all $-r< i < r$ and $i-r< j\leq i$,
\[
(AB^{(j)})_{i,j-q}=(B^{(j)}A)_{i,j-q}=0.
\]
As $B_{0,-q} \ne 0$, we may choose $z\in \mathbb{F}_p^d$ such that $B_{0,-q}z\ne 0$. Setting $\bar{z} = zt^{-q}\in \Laurp^d$ and $\yy = B\bar{z}$, we achieve that
\begin{itemize}
\item $\yy\in \Powerp^d$ and $\yy_0\in \mathbb{F}_p^d$ is non-zero; and 
\item $(A\timestd^{j}\yy)_i = 0$ for every $-r<i<r$ and $j>i-r$. 
\end{itemize}
Since this holds for every $r>0$, it follows by Lemma~\ref{lem:approximation} that there is $\zz\in \Powerp^d$ such that $\zz_0 \ne 0$ and $(A\timestd^{j}\zz)_i = 0$ for all $i,j\in \mathbb{Z}$.
\end{proof}

\begin{remark}
The `on the diagonal' step is the only point in the above argument where it is essential that the field $\Laurp$ has positive characteristic.
The claim may fail otherwise
because, for example, the representation
of~$\R$ given by the exponential map $t\mapsto \exp(t) : \mathbb{R} \to \mathbb{R}^\times\cong
\Aut(\mathbb{R})$ has no joint fixed vectors
and $\mathbb{R} \rtimes \mathbb{R}^\times$ is
a solvable group but not nilpotent.
\end{remark}

\begin{remark}
When the scale of $\tau^{-1}$ is equal to $p^2$ and the composition series for $(G,\tau)$ has length~$2$, the proof of Theorem~\ref{thm:main} is simpler while still illustrating the main steps in the proof of Theorem~\ref{thm:second_main}. Suppose that the composition series is
$$
\{e\} = G_0 < G_1 < G_2 = G,
$$
so that $G_1 \cong \Laurp \cong G/G_1$. Then showing that $G$ is nilpotent reduces to showing that $G_1$ is contained in the centre of $G$, which is equivalent to showing that $\phi(f)$ is the identity automorphism of $\Laurp$ for all $f\in \Laurp$. Hence the proof in this case is a verification that the ($(1\times1)$-block) matrix of $A =\phi(t^0)$ is the identity, rather than a proof of existence of an eigenvector with eigenvalue~$1$. For the verification, the arguments in the proof of Theorem~\ref{thm:second_main} show in turn that: all $a_{ij}$  above the diagonal are $0$ (by contradiction); all $a_{ij}$ on the diagonal equal~$1$ (because $A$ has order~$p$ but $(\mathbb{F}_p\setminus\{0\},\times)$ has order $p-1$); and all $a_{ij}$ below the diagonal are $0$ (by contradiction).
\end{remark}

\begin{remark}
\label{rem:Zsystem}
Theorem~A in~\cite{Zsystem} may be deduced from Theorem~\ref{thm:main} in the present paper. Although the former paper concerns Moufang twin trees, it is not necessary to know what they are in order to follow the proof because~\cite[Theorem~A]{Zsystem} is deduced from another purely group-theoretic result. To do this, the notion of a \emph{$\mathbb{Z}$-system of order $p$} is defined. This is a group, $X$, and a sequence $(x_n)_{n\in\mathbb{Z}}\subset X$ satisfying certain axioms set out in~\cite[Definition~3.2]{Zsystem}. Then~\cite[Theorem~A]{Zsystem} is deduced from~\cite[Theorem~3.4]{Zsystem}, which asserts that if $(X,(x_n)_{n\in\mathbb{Z}})$ is a $\mathbb{Z}$-system of prime order, then $X$ is nilpotent of class at most~$2$. 

That result follows from our Theorem~\ref{thm:main}. Briefly, let $(X,(x_n)_{n\in\mathbb{Z}})$ be a $\mathbb{Z}$-system of order $p$, and let $t$ be the automorphism of $X$ such that $t(x_n) = x_{n+2}$. Then $X_{0,\infty} = \langle x_k \mid k\geq0\rangle$, see~\cite[Definition~4.1]{Zsystem}, is a commensurated subgroup of $X\rtimes\langle t\rangle$. Denote the Schlichting completion (see~\cite{ReidWes,Tzanev}) of $X\rtimes\langle t\rangle$ relative to $X_{0,\infty}$ by $\widetilde{X}\rtimes\langle t\rangle$ and let $\widetilde{X}_{0,\infty}$ be the closure of $X_{0,\infty}$ in this completion. Then $\widetilde{X}_{0,\infty}$ is a pro-$p$ open subgroup of $\widetilde{X}\rtimes\langle t\rangle$ and $t(\widetilde{X}_{0,\infty})<\widetilde{X}_{0,\infty}$ with $[\widetilde{X}_{0,\infty} : t(\widetilde{X}_{0,\infty})] = p^2$. Hence the pair $(\widetilde{X}, t)$ is a contraction group having composition series of length~$2$ and composition factors isomorphic to $\Laurp$, and so $\widetilde{X}$ is nilpotent of class at most~$2$ by Theorem~\ref{thm:main}. Since $X$ is dense in $\widetilde{X}$, it too is nilpotent of class at most~$2$.
\end{remark}
\begin{remark}
\label{rem:endo_profinite}
The following question is posed in~\cite{Reid}: suppose that $G$ is a topologically finitely generated profinite group and that $\phi $ is an open self-embedding of $G$, is $\sf{con}(\phi)$ nilpotent? Here, 
$$
\sf{con}(\phi) = \left\{ g\in G \mid \phi^n(g)\to \id\mbox{as }n\to\infty\right\}
$$ 
and is closed, by~\cite[Theorem~A]{Reid}, and residually nilpotent, by~\cite[Corollary~1.3]{Reid}. Since $\sf{con}(\phi)$ is compact, it is in fact a projective limit of nilpotent groups and hence \emph{pro-nilpotent} in the sense of~\cite{Wsn}. We now answer the question.

Following~\cite{GW2}, $G$ is called
\emph{locally pro-nilpotent} if
$G$ has a pro-nilpotent open subgroup.
For each locally pro-nilpotent contraction group
$(G,\tau)$, there exist a finite set of primes~$P$
and $\tau$-invariant closed normal subgroups $G_p$ of~$G$ for $p\in P$
which are locally pro-$p$, such that
\[
G\cong \prod_{p\in P}G_p
\]
internally as a topological group (see \cite[Proposition~2.16]{GW2}).
Thus Theorem~\ref{thm:main} in the present paper entails
that every locally pro-nilpotent contraction group is nilpotent and hence, in answer to Reid's question in~\cite{Reid}, that $\sf{con}(\phi)$ is indeed nilpotent.
\end{remark}
{\small{\bf Helge  Gl\"{o}ckner}, Institut f\"{u}r Mathematik, Universit\"at Paderborn,\\
Warburger Str.\ 100, 33098 Paderborn, Germany,
email: {\tt  glockner\at{}math.upb.de};\\[1mm]
\emph{also conjoint professor at} Department of Mathematics,
University of Newcastle, Callaghan, NSW 2308, Australia.\\[3mm]
{\bf George A. Willis}, Department of Mathematics, University of Newcastle,\\
Callaghan, NSW 2308, Australia, email: {\tt George.Willis\at{}newcastle.edu.au}}\vfill
\end{document}